\title{On regularity theory for $n/p$-harmonic maps into manifolds}
\author{Francesca Da Lio}
\author{Armin Schikorra}
\address[Francesca Da Lio]{Department of Mathematics, ETH Z\"urich, R\"amistrasse 101, 8092 Z\"urich, Switzerland.}
\email{fdalio@math.ethz.ch}
\address[Armin Schikorra]{Department of Mathematics, 301 Thackeray Hall, University of Pittsburgh, PA 15260, USA}
\email{armin@pitt.edu}
\def\eps{\varepsilon}
\def\N{{\mathbb N}}
\newtheorem{theorem}{Theorem}
\newtheorem{lemma}[theorem]{Lemma}
\newtheorem{proposition}[theorem]{Proposition}
\theoremstyle{definition}
\newcommand{\R}{\mathbb{R}}
\newcommand{\brac}[1]{\left (#1 \right )}
\newcommand{\barint}{
\rule[.036in]{.12in}{.009in}\kern-.16in \displaystyle\int }
\newcommand{\barcal}{\mbox{$ \rule[.036in]{.11in}{.007in}\kern-.128in\int $}}
\def\mvint_#1{\mathchoice
          {\mathop{\vrule width 6pt height 3 pt depth -2.5pt
                  \kern -8pt \intop}\nolimits_{\kern -3pt #1}}%
          {\mathop{\vrule width 5pt height 3 pt depth -2.6pt
                  \kern -6pt \intop}\nolimits_{#1}}%
          {\mathop{\vrule width 5pt height 3 pt depth -2.6pt
                  \kern -6pt \intop}\nolimits_{#1}}%
          {\mathop{\vrule width 5pt height 3 pt depth -2.6pt
                  \kern -6pt \intop}\nolimits_{#1}}}
\numberwithin{theorem}{section} \numberwithin{equation}{section}
\renewcommand{\div}{\operatorname{div}}
\newcommand{\lap}{\Delta }
\newcommand{\aleq}{\lesssim}
\newcommand{\aeq}{\simeq}
\newcommand{\Rz}{\mathcal{R}}
\newcommand{\laps}[1]{(-\lap)^{\frac{#1}{2}}}
\newcommand{\lapv}{(-\lap)^{\frac{1}{4}}}
\newcommand{\lapms}[1]{I^{#1}}
\begin{document}

\sloppy
\keywords{Fractional harmonic maps, nonlinear elliptic PDE's, regularity of solutions, commutator estimates.}
\subjclass[2010]{58E20, 35J20, 35B65, 35J60, 35S99}
\sloppy


\begin{abstract}
In this paper we continue the investigation started in the paper \cite{DaLio-Schikorra}  of the regularity of the so-called weak
$\frac{n}{p}$-harmonic maps in the critical case. These are  critical points of the following nonlocal energy
\[
{\mathcal{L}}_s(u)=\int_{\R^n}| ( {-\Delta})^{\frac{s}{2}} u(x)|^p dx\,,
\]
where $u\in \dot{H}^{s,p}(\R^n,\mathcal{N})$ and ${\mathcal{N}}\subset\R^N$ is a closed  $k$ dimensional smooth manifold  and $s=\frac{n}{p}$. 
We prove H\"older continuity for such critical points for $p \leq 2$. For $p > 2$ we obtain the same under an additional Lorentz-space assumption.
The regularity theory is in the two cases based on regularity results for nonlocal Schr\"odinger systems with an antisymmetric potential.
\end{abstract}

\maketitle
\tableofcontents

\section{Introduction}

Half-harmonic maps were first studied by Rivi\`{e}re and the first-named author \cite{DLR1,DLR2}. The $L^2$-regularity theory has been extended to higher dimension \cite{Millot-Sire-2015,Schikorra-n2,DLnD,Schikorra-eps}, and to $L^p$-energies \cite{DaLio-Schikorra,Schikorra-gagliardo,Schikorra-lpgrad}. Compactness and quantization issues have been addressed \cite{DLbubble,DaLio-Laurain-Riviere-2016}.

Here we extend our analysis of weak $n/p$-harmonic maps initiated in  \cite{DaLio-Schikorra} in the sphere case to general target manifolds. 

They are critical points of the energy 
\begin{equation}\label{Energy}
{\mathcal{L}}_s(u)=\int_{\R^n}| ( {-\Delta})^{\frac{s}{2}} u(x)|^p dx
\end{equation}
acting on maps $u \in \dot{H}^{s,p}(\R^n,\R^N)$ which pointwise map into a smooth, closed   (compact  and  without boundary)  $k$-dimensional manifold  $\mathcal{N} \subset \R^N$. This class of maps is commonly denoted by $\dot{H}^{s,p}(\R^n,\mathcal{N})$.  We will refer to Section~\ref{defnot} for the precise definition of such functional spaces.
The Euler-Lagrange equation for critical points $u$ can be formulated as follows  \begin{equation}\label{eq:eleq}
\Pi(u)( \laps{s} \brac{|\laps{s} u|^{p-2} \laps{s}u})=0  ~~\mbox{in ${\mathcal{D}}'(\R^n)$}\end{equation}
where $\Pi\colon {\mathcal{U}}_\delta\to  \mathcal{N}$ is the standard nearest point projection of a $\delta$-neighborhood ${\mathcal{U}}_\delta$ of $ \mathcal{N}$ onto   $\mathcal{N}$.
  Our first main result is the regularity theory for the case $p < 2$.
\begin{theorem}\label{th:pl2}
Assume that $u \in \dot{H}^{s,\frac{n}{s}}(\R^n,\mathcal{N})$, for $\frac{n}{s} \leq 2$, is a solution to \eqref{eq:eleq}.
Then $u$ is locally H\"older continuous.
\end{theorem}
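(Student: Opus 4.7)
My plan is to adapt the Rivière-type regularity program to the nonlocal $p$-Laplacian setting, reducing Theorem~\ref{th:pl2} to the regularity theorem for fractional Schrödinger systems with antisymmetric potential that is announced in the abstract. The strategy generalizes the sphere case of \cite{DaLio-Schikorra} to an arbitrary closed target $\mathcal{N}$.

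First, I would recast \eqref{eq:eleq} as a system with manifest antisymmetric structure. Set $w := |\laps{s} u|^{p-2}\laps{s} u$; then \eqref{eq:eleq} says that $\laps{s} w$ is pointwise normal to $\mathcal{N}$ at $u$. Writing this normal component through the tangent projection $d\Pi(u)$ and a local orthonormal frame for the normal bundle pulled back along $u$, one should arrive at
\[
\laps{s} w^i \,=\, \Omega^{ij}\, w^j \,+\, F^i, \qquad \Omega^{ij} = -\Omega^{ji},
\]
where $\Omega$ is an antisymmetric matrix field built from $d\Pi(u)$ and the geometry of $\mathcal{N}$, and $F$ collects three-term commutator remainders of the form $[\laps{s}, d\Pi(u)]\cdot w$. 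The antisymmetry of $\Omega$, which comes from $d\Pi(u)$ being an orthogonal projection, supplies the compensated-compactness gain needed to close the critical scaling.

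Second, I would invoke the regularity theorem for nonlocal Schrödinger systems with antisymmetric potential (the core ingredient of the paper) on the above system. This yields an $\varepsilon$-gain of integrability for $w$ on a Lorentz or Morrey scale. Inverting the nonlinear relation $\laps{s} u = |w|^{(2-p)/(p-1)}\, w$ then transfers the improvement to $\laps{s} u$. The hypothesis $p \leq 2$ enters precisely here: the exponent $(2-p)/(p-1)$ is nonnegative, so $w \mapsto \laps{s} u$ is a tame nonlinearity and the Lorentz/Morrey improvement carries over cleanly, whereas for $p > 2$ the exponent is negative and one genuinely needs the additional Lorentz assumption of the companion theorem.

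Third, once improved integrability for $\laps{s} u$ is in hand, a standard bootstrap using fractional Riesz-potential bounds and Sobolev/Morrey embeddings upgrades the gain into local Hölder continuity of $u$. The main obstacle is Step~1: one must produce the antisymmetric decomposition while juggling three simultaneous nonlinearities---the inner $|\laps{s}u|^{p-2}\laps{s}u$, the nonlinear projection $\Pi$ for an arbitrary target, and the critical borderline integrability of all quantities. This demands fractional three-commutator estimates extending those of \cite{DaLio-Schikorra}, together with the delicate verification that the antisymmetric structure is preserved by those identities rather than being swallowed into the remainder $F$.
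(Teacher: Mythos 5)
Your overall strategy matches the paper's: set $w = |\laps{s}u|^{p-2}\laps{s}u$, reformulate the Euler--Lagrange equation as a nonlocal Schr\"odinger system with antisymmetric potential $\Omega$ plus a controllable error term (this is Proposition~\ref{pr:reformulation}), feed it into the $\varepsilon$-regularity/Morrey estimate for such systems (Proposition~\ref{antisympotreg}, proved via the gauge construction of Section~\ref{s:goodgauge}), and conclude local H\"older continuity via Sobolev--Morrey embedding. The paper does not pass through a pulled-back orthonormal frame for the normal bundle; it works directly with the projections $\Pi(u)$, $\Pi^\perp(u)$ and takes $\Omega := \laps{s}\Pi^\perp(u)\,\Pi(u) - \Pi(u)\laps{s}\Pi^\perp(u)$, whose antisymmetry comes from the symmetry of the projection matrices together with $\Pi\Pi^\perp=0$. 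That is a cosmetic difference; the plan is the same.

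However, your explanation for \emph{where} the hypothesis $p\le 2$ enters is not the one operating in the proof, and I do not believe it is correct. You locate it in the ``inversion'' $w\mapsto \laps{s}u$, arguing that the exponent $(2-p)/(p-1)$ being nonnegative makes the map tame. But $|\laps{s}u|=|w|^{1/(p-1)}$ is a monotone radial transformation regardless of the sign of $2-p$, and Lorentz quasinorms convert simply: $\|w\|_{(p',\infty),B}=\|\laps{s}u\|_{(p,\infty),B}^{p-1}$ for every $p\in(1,\infty)$, so the Morrey decay for $w$ transfers to $\laps{s}u$ with no obstruction in either regime. The genuine reason $p\le 2$ suffices without extra hypotheses is the Lorentz inclusion $L^p = L^{(p,p)}\hookrightarrow L^{(p,2)}$, which holds precisely when $p\le 2$. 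The reformulation step requires control of $\|\laps{s}u\|_{(p,2)}$ --- for instance to place $\Omega$ in $L^{(p,2)}$ and to bound $\|H_s(\Pi(u),\Pi^\perp(u))\|_{(p,1)}\aleq\|\laps{s}u\|_{(p,2)}^{2}$ --- and for $p\le 2$ this comes for free from $\laps{s}u\in L^p$, whereas for $p>2$ the inclusion fails, which is exactly why Theorem~\ref{th:pg2} must carry the additional assumption \eqref{eq:extraassumption}. You should correct this diagnosis, since it is the crux of the dichotomy between the two theorems.
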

The case $p>2$ presents additional difficulties.
  Here we show, that under the additional assumption that $\laps{s} u$ belongs to the smaller Lorentz space $L^{(p,2)} \subset L^p$, regularity theory follows. More precisely we have
\begin{theorem}\label{th:pg2}
Assume that $u \in \dot{H}^{s,\frac{n}{s}}(\R^n,\mathcal{N})$, for $\frac{n}{s} \geq 2$, is a solution to \eqref{eq:eleq}. If we additionally assume
\begin{equation}\label{eq:extraassumption}
 \|\laps{s} u\|_{(\frac{n}{s},2)} < \infty,
\end{equation}
then $u$ is locally H\"older continuous.
\end{theorem}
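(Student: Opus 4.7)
The plan is to extend the Rivi\`ere-type gauge approach for nonlocal systems with antisymmetric potentials, used for $p\le 2$ in Theorem~\ref{th:pl2}, to the quasilinear regime $p>2$, with the Lorentz hypothesis \eqref{eq:extraassumption} compensating the loss of linearity. Fix a smooth local orthonormal frame $\{\nu_\alpha(u)\}_{\alpha=1}^{N-k}$ of the normal bundle of $\mathcal{N}$, and introduce the nonlinear flux $w:=|\laps{s}u|^{p-2}\laps{s}u$. Since $\Pi(u)$ is the orthogonal projection onto $T_u\mathcal{N}$, the Euler-Lagrange equation \eqref{eq:eleq} is equivalent to the statement that $\laps{s} w$ is pointwise normal at $u$, i.e.
\[
\laps{s}w \;=\; \sum_{\alpha} \nu_\alpha(u)\,\bigl\langle \nu_\alpha(u),\,\laps{s}w\bigr\rangle.
\]
Symmetrizing the right-hand side via three-term commutator identities \`a la Da Lio--Rivi\`ere, exploiting that $\langle \nu_\alpha(u),\laps{s}u\rangle$ is itself a commutator (reflecting $\langle \nu_\alpha(u),du\rangle\equiv 0$ on $\mathcal{N}$), and invoking the symmetry of the second fundamental form, one should arrive at the nonlocal Schr\"odinger-type system
\[
\laps{s}w \;=\; \Omega\,w + F,
\]
with $\Omega=(\Omega_\beta^\alpha)$ antisymmetric and built from $\nu_\alpha(u)$ and fractional derivatives of $\nu_\alpha(u)$, and $F$ an error of strictly subcritical integrability.

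For $p=2$ one has $w=\laps{s}u\in L^2$ and $\Omega\in L^2$, which is the setting of Theorem~\ref{th:pl2}. For $p>2$ the mismatch $w\in L^{p'}$ versus $\laps{s}u\in L^p$ is what forces assumption \eqref{eq:extraassumption}: by the Lorentz H\"older inequality, $|\laps{s}u|^{p-2}\in L^{(p/(p-2),2/(p-2))}$, hence $w\in L^{(p',q)}$ for some $q<\infty$, while a fractional chain rule applied to $\nu_\alpha\circ u$ places $\laps{s}\nu_\alpha(u)\in L^{(p,2)}$. These Lorentz-scale bounds are exactly what is needed to put the three-term commutators into an $L^{(1,q)}$ Lorentz--Hardy space and thereby realize $F$ as a subcritical error. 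With $\Omega$ antisymmetric and $\|\Omega\|_{L^{(p,2)}(B_r)}$ small (small by absolute continuity of the Lorentz norm on small balls), a fractional Uhlenbeck-type gauge $\Omega=P^{-1}\laps{s}P+P^{-1}\xi P$ can be constructed; conjugating the system by $P$ and invoking standard nonlocal potential estimates for the antisymmetric remainder yields a Morrey-type decay of the form
\[
\|\laps{s}u\|_{L^{(p,2)}(B_{\theta r})} \;\le\; \theta^{\alpha}\,\|\laps{s}u\|_{L^{(p,2)}(B_r)}
\]
under smallness of $\|\laps{s}u\|_{L^{(p,2)}(B_r)}$. Iterating the decay yields Morrey growth of $\laps{s}u$, and then local H\"older continuity of $u$ by Sobolev--Morrey embedding.

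The main obstacle is the construction of $\Omega$ and the $L^{(p,2)}$-control of $F$ in the case $p>2$. Unlike the $p=2$ case, $w$ depends nonlinearly on $\laps{s}u$, so the three-term commutator identities must be combined with a fractional chain rule for $|\laps{s}u|^{p-2}\laps{s}u$ inside the Lorentz scale. It is precisely at this step that \eqref{eq:extraassumption} is indispensable: a pure $L^p$ framework would only place the relevant commutator products in weak-$L^1$, which is not enough to run the gauge construction. Once the Schr\"odinger system is in place, the remaining smallness, iteration, and H\"older-continuity arguments closely mirror their $p\le 2$ counterparts.
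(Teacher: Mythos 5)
Your outline follows essentially the same route as the paper: rewrite the Euler--Lagrange equation for $w=|\laps{s}u|^{p-2}\laps{s}u$ as a nonlocal Schr\"odinger system with an antisymmetric $L^{(\frac{n}{s},2)}$ potential plus a subcritical error (the paper's Proposition~\ref{pr:reformulation}), construct a fractional Uhlenbeck gauge making the potential $L^{(\frac{n}{s},1)}$-small (Theorem~\ref{th:gauge}), derive Morrey decay, and conclude by Sobolev--Morrey embedding; and you correctly pinpoint where \eqref{eq:extraassumption} is needed to push the commutator products into a Lorentz--Hardy scale rather than merely weak-$L^1$. The only cosmetic differences are that the paper extracts $\Omega=\laps{s}\Pi^\perp(u)\,\Pi(u)-\Pi(u)\laps{s}\Pi^\perp(u)$ purely algebraically from the tangent/normal projections (so antisymmetry is immediate and no local normal frame or second fundamental form is invoked), and the Morrey decay is established for $\|w\|_{(\frac{n}{n-s},\infty)}$ rather than directly for $\|\laps{s}u\|_{(p,2)}$, the two being equivalent for the iteration since $|w|=|\laps{s}u|^{p-1}$.
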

Let us stress that the extra assumption \eqref{eq:extraassumption} is not motivated by geometric arguments, but by pure analytic considerations, and we do not know if \eqref{eq:extraassumption} is a necessary assumption. Indeed this is related to a major open problem, the regularity theory of $n$-harmonic maps into manifolds and generalized $H$-systems, see \cite{Schikorra-Strzelecki}. Also in that case, regularity can only be proven under additional analytic assumptions that cannot be justified geometrically, see \cite{Kolasinksi-2010,Schikorra-2013}. However, these additional assumptions do not a priori rule out the possible singularities such as $\log \log 1/|x|$, so the geometric structure of the Euler-Lagrange equation plays an important role. 

Both theorems follow from a reduction to a system with antisymmetric structure, in the spirit of Rivi\`{e}re's seminal work \cite{Riviere-2007} which was adapted to nonlocal equations first by Rivi\`{e}re and the first-named author \cite{DLR2}, for related arguments see also \cite{DLnD,Mazowiecka-Schikorra}. Namely we have

\begin{proposition}\label{pr:reformulation}
Let $u$ satisfy the hypotheses either of Theorem~\ref{th:pl2} or of Theorem  \ref{th:pg2}, $p = \frac{n}{s}$. Set $w := |\laps{s} u|^{p-2}\laps{s}u$. Then $w$ satisfies
\begin{equation}\label{schro}
 \laps{s}w^i = \Omega_{ij} w^j + E_i(w),
\end{equation}
where $\Omega_{ij} = -\Omega_{ji}$ belongs to $L^p(\R^n)$ (for $p \leq 2$) or to $L^{(p,2)}(\R^n)$ (for $p > 2$).

Moreover, $E_i$ is so that for any $\eps > 0$ there exists a radius $R = R(\eps)$ and a $K \in \N$ so that for any $k_0 \in \N$, $k_0 > K$, any $x_0 \in \R^n$ and for any radius $r \in (0,2^{-k_0}R)$ it holds that for any $\varphi \in C_c^\infty(B(x_0,r))$
\[
 \int_{\R^n} E_i(w)\varphi \aleq \eps\, \brac{\|\varphi\|_{\infty} + \|\laps{s} \varphi\|_{(p,2)}}\ \left(\|w\|_{(p',\infty),B(x_0,2^{k_0} r)}+ \sum_{k=k_0}^\infty 2^{-k\sigma}\|w\|_{(p',\infty),B(x_0,2^{k} r)}\right). 
\]
if $p>2$ and
\[
 \int_{\R^n} E_i(w)\varphi \aleq \eps\, \brac{\|\varphi\|_{\infty} + \|\laps{s} \varphi\|_{p}}\ \left(\|w\|_{(p',\infty),B(x_0,2^{k_0} r)}+ \sum_{k=k_0}^\infty 2^{-k\sigma}\|w\|_{(p',\infty),B(x_0,2^{k} r)}\right). 
\]
if $p\le 2$.
Here $\sigma > 0$ is a uniform constant only depending on $s$ and $n$.
\end{proposition}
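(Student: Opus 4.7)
My plan is to carry out the nonlocal analogue of Rivi\`{e}re's reformulation \cite{Riviere-2007}, as implemented for the fractional Laplacian in \cite{DLR2, DaLio-Schikorra, Mazowiecka-Schikorra}. Since $\Pi(u)(\laps{s}w)=0$ and $\Pi(u)$ is the orthogonal projection onto $T_{u}\mathcal{N}$, fixing a smooth orthonormal frame $(\nu_\alpha)_{\alpha=1}^{N-k}$ for the normal bundle in a tubular neighbourhood of $\mathcal{N}$ gives the pointwise identity
\[
\laps{s}w^i = \sum_\alpha \nu_\alpha^i(u)\,\nu_\alpha^j(u)\,\laps{s}w^j.
\]
I would then apply the three-term fractional Leibniz identity
\[
f\,\laps{s}g = \laps{s}(fg) - g\,\laps{s}f - T(f,g),\qquad T(f,g):=\laps{s}(fg)-f\,\laps{s}g-g\,\laps{s}f,
\]
with $f=\nu_\alpha^j(u)$ and $g=w^j$, and split the resulting term $\nu_\alpha^i(u)\,w^j\,\laps{s}\nu_\alpha^j(u)$ into its $i\leftrightarrow j$ antisymmetric and symmetric parts. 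The antisymmetric part yields
\[
\Omega_{ij} := \frac{1}{2}\sum_\alpha \brac{\nu_\alpha^j(u)\,\laps{s}\nu_\alpha^i(u) - \nu_\alpha^i(u)\,\laps{s}\nu_\alpha^j(u)},
\]
which is manifestly antisymmetric; the symmetric remainder, the three-commutator $\nu_\alpha^i(u)\,T(\nu_\alpha^j(u),w^j)$, and the divergence-type contribution $\nu_\alpha^i(u)\,\laps{s}(\nu_\alpha^j(u)w^j)$ are collected into $E_i(w)$.

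For the integrability of $\Omega$ I would invoke the chain-rule/composition estimate for $\laps{s}$ applied to the smooth bounded maps $\nu_\alpha$, which gives $\|\laps{s}\nu_\alpha(u)\|_X \aleq \|\laps{s}u\|_X$ for the relevant scale $X$. Together with $\|\nu_\alpha(u)\|_\infty\le C$, this produces $\Omega\in L^p$ in the case $p\le 2$, and $\Omega\in L^{(p,2)}$ in the case $p>2$ directly from hypothesis \eqref{eq:extraassumption}.

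The bulk of the work is the localized error estimate. I would dualize against $\varphi\in C_c^\infty(B(x_0,r))$ and treat the three contributions to $E_i$ separately. The three-commutator piece and the symmetric remainder (which after a further Leibniz step becomes proportional to $\laps{s}((\id-P(u))^{ij})w^j$ plus another three-commutator) are controlled by the Coifman--Rochberg--Weiss / Chanillo / Schikorra-type commutator estimates in Lorentz spaces developed in \cite{DLR2, DaLio-Schikorra}; against $\varphi$ they produce the norm $\|\varphi\|_\infty$ on the right-hand side. For the divergence-type piece I would integrate by parts symmetrically, transferring $\laps{s}$ onto $\nu_\alpha^i(u)\varphi$, and expand via Leibniz into (i) a term $\nu_\alpha^i(u)\laps{s}\varphi$ paired with $\nu_\alpha^j(u)w^j$, producing the $\|\laps{s}\varphi\|$ contribution by H\"older in Lorentz spaces, and (ii) further three-commutators of the type already treated. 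The smallness factor $\eps$ is then extracted by choosing $R=R(\eps)$ so that the local seminorms $\|\laps{s}u\|_{X(B(x_0,R))}$ drop below a fixed threshold, using absolute continuity of the energy. The dyadic tail $\sum_{k\ge k_0}2^{-k\sigma}\|w\|_{(p',\infty),B(x_0,2^kr)}$ arises from decomposing $w = w\chi_{B(x_0,2^{k_0}r)} + \sum_{k\ge k_0} w\chi_{B(x_0,2^{k+1}r)\setminus B(x_0,2^k r)}$ and using the pointwise decay of the Riesz kernel of $\laps{s}\varphi$ away from the support of $\varphi$; the exponent $\sigma>0$ depends only on $s,n$.

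The principal obstacle is producing all three ingredients simultaneously: the smallness constant $\eps$, the splitting of the $\varphi$-dependence into $\|\varphi\|_\infty$ and $\|\laps{s}\varphi\|$, and the explicit dyadic decay. The first requires choosing $R=R(\eps)$ and the dyadic threshold $K$ via absolute continuity of the energy; the second requires careful bookkeeping of which Leibniz move places the surviving $\laps{s}$ on $w$ versus on $\varphi$; the third requires harvesting the Riesz-kernel decay uniformly over the annular decomposition. The two regimes $p\le 2$ and $p>2$ diverge only in which Lorentz scale carries $\Omega$ and $\laps{s}\varphi$: for $p>2$ the endpoint $L^p$-commutator estimates are not sharp enough, and the Lorentz refinement $L^{(p,2)}$ --- guaranteed precisely by \eqref{eq:extraassumption} --- is indispensable.
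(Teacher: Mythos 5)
Your plan is structurally parallel to the paper's, but the opening move contains a genuine gap: you fix a smooth orthonormal frame $(\nu_\alpha)_{\alpha=1}^{N-k}$ for the normal bundle of $\mathcal{N}$ over a tubular neighbourhood and build both the identity $\laps{s}w^i=\sum_\alpha\nu_\alpha^i(u)\nu_\alpha^j(u)\laps{s}w^j$ and the antisymmetric potential $\Omega$ out of $\nu_\alpha(u)$. For a general closed embedded $\mathcal{N}\subset\R^N$ the normal bundle need not be trivializable, so no such global smooth frame exists; this is precisely the obstruction that makes the passage from the sphere case of \cite{DaLio-Schikorra} to general targets nontrivial, and it is not repaired by working only in a tubular neighbourhood, since the obstruction is topological in the fibre. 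The paper circumvents this by never introducing a frame: it works directly with the globally smooth, pointwise symmetric projection fields $\Pi(u)$ and $\Pi^\perp(u)=I-\Pi(u)$, tests $w=|\laps{s}u|^{p-2}\Pi(u)\laps{s}u+|\laps{s}u|^{p-2}\Pi^\perp(u)\laps{s}u$ against $\laps{s}(\Pi^\perp(u)\varphi)$, and uses $\Pi(u)\Pi^\perp(u)=0$ to kill the top-order term. The antisymmetric potential then appears as $\Omega:=\laps{s}\Pi^\perp(u)\,\Pi(u)-\Pi(u)\,\laps{s}\Pi^\perp(u)$, antisymmetric because $\Pi$, $\Pi^\perp$, and hence $\laps{s}\Pi^\perp(u)$ are symmetric matrices. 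If you replace your frame-based identity with this projection-based split, the remainder of your plan (three-commutator Lorentz estimates, chain-rule bound for $\laps{s}\Pi^\perp(u)$, the dyadic annular decomposition producing the $2^{-k\sigma}$ tail, and absolute continuity to extract $\eps$) proceeds essentially along the paper's lines.

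Two secondary points. For $s\ge 1$ the chain-rule bound $\|\laps{s}\Pi^\perp(u)\|\aleq\|\laps{s}u\|$ in the relevant Lorentz scale does not follow from the $s\in(0,1]$ Leibniz rule alone; the paper writes $\laps{s}=\laps{s-1}\Rz_\alpha\partial_\alpha$, exploits the tangency $\Pi^\perp(u)\partial_\alpha u=0$, and invokes Lemma~\ref{reghs4}. And to control the degenerate factor $|\laps{s}u|^{p-2}\Pi^\perp(u)\laps{s}u$ in $L^{p'}$ when $p\le 2$, the paper needs the interpolated pointwise estimate $||\laps{s}u|^{p-2}\Pi^\perp(u)\laps{s}u|\aleq|H_s(u,u)|^{p-1}$ from Lemma~\ref{la:projectionlapu}; that step is not visible in your outline and is one of the places where the regimes $p\le 2$ and $p>2$ genuinely diverge beyond the mere choice of Lorentz index.
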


Then, Theorem~\ref{th:pl2} and Theorem~\ref{th:pg2} follow from the following result on Schr\"odinger-type equations and the Sobolev embedding for Sobolev-Morrey spaces \cite{Adams-1975}.
\begin{proposition}\label{antisympotreg}
If $w \in L^{\frac{n}{n-s}}(\R^n,\R^N)$ is a solution of
\begin{equation}\label{eq:antisymeq}
 \laps{s}w^i = \Omega_{ij}\, w^j + E_i(w) \quad \mbox{in $\R^n$},
\end{equation}
where $\Omega_{ij} = -\Omega_{ji} \in L^{(\frac{n}{s},2)}$ and $E$ is as in Proposition  \ref{pr:reformulation}. Then there exists $\alpha > 0$ so that for every $x_0\in \R^n$  it holds
\begin{equation}\label{eq:morreyest}
 \sup_{x\in B(x_0,\rho)} \rho^{-\alpha} \|w\|_{(\frac{n}{n-s},\infty),B(x,\rho)} < \infty.
\end{equation}
\end{proposition}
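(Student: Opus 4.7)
The plan is to adapt Rivière's gauge-based regularity scheme, as extended to the fractional setting by Da Lio--Rivière, to the present Lorentz-space and higher-dimensional context. The antisymmetry $\Omega_{ij} = -\Omega_{ji}$ is the crucial structural ingredient: it permits the construction of a gauge that compensates the otherwise critical $\Omega w$ term.

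As a first step, I would establish a fractional Uhlenbeck-type gauge: for every $\eps_0 > 0$ there exists $\delta > 0$ such that whenever $\|\Omega\|_{(n/s,2),B(x_0,r)} < \delta$ one can produce a map $P \colon \R^n \to SO(N)$ (close to the identity outside $B(x_0,r)$) satisfying $\|\laps{s} P\|_{(n/s,2)} \leq \eps_0$ and such that the gauged potential $P\Omega P^{-1} - (\laps{s}P)P^{-1}$ enjoys a compensated-compactness structure, for instance a fractional divergence-free form, as in the constructions of Da Lio--Rivière and Mazowiecka--Schikorra. Setting $v := P w$ and commuting $\laps{s}$ with $P$ leads, via three-commutator estimates, to an equation
\begin{equation*}
 \laps{s} v^i = G_{ij}\, w^j + \tilde E_i(w) + T_i(P,w),
\end{equation*}
in which $G$ has the gauged structure, $T$ is a commutator-type remainder controlled by $\|\laps{s}P\|_{(n/s,2)} \|w\|_{(p',\infty)}$ in Lorentz norms, and $\tilde E$ inherits the tail-estimate structure of Proposition~\ref{pr:reformulation}.

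Next, I would localize on a ball $B(x_0,r)$ and test \eqref{eq:antisymeq} against a carefully chosen (mean-zero or atom-like) cutoff $\varphi$ supported in $B(x_0,r)$. Lorentz--Hölder duality $L^{(n/s,2)} \cdot L^{(n/(n-s),\infty)} \subset L^{(1,2)}$ together with the compensated structure of $G$ (paired with $\laps{s}\varphi$) yields an estimate of the form
\begin{equation*}
  \|w\|_{(p',\infty),B(x_0,r)} \leq \tfrac{1}{2}\, \|w\|_{(p',\infty),B(x_0,2r)} + \sum_{k\geq k_0} 2^{-k\sigma} \|w\|_{(p',\infty),B(x_0,2^{k}r)} + C\eps,
\end{equation*}
provided $r$ is small enough that $\|\Omega\|_{(n/s,2),B(x_0,2^{k_0}r)} < \delta$ (possible by absolute continuity of the Lorentz norm) and $\eps$ in the hypothesis on $E$ is chosen small. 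A standard Campanato--Morrey dyadic iteration then upgrades this contractive decay, after absorbing the geometrically summable tails, to the Morrey bound \eqref{eq:morreyest} with some $\alpha > 0$ depending only on $s$, $n$, and the decay rate $\sigma$.

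The main obstacle will be two-fold: first, constructing the fractional Coulomb-type gauge $P$ with the quantitative Lorentz-space control $\|\laps{s} P\|_{(n/s,2)} \lesssim \|\Omega\|_{(n/s,2)}$ required here (the classical constructions are in $L^{n/s}$; the refinement to the Lorentz space $L^{(n/s,2)}$ is needed to close the duality in the next step), and second, handling the nonlocal tails so that the contraction factor $\tfrac{1}{2}$ genuinely survives after summing the tail contributions coming both from $\laps{s}$ and from the error term $E$. Once the gauge is produced and the commutator estimates are in place, the Campanato iteration itself is standard.
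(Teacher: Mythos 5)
Your overall blueprint --- gauge transformation in the spirit of Da Lio--Rivi\`ere, localization, dyadic iteration using the tail structure of $E$ --- matches the paper's strategy. However, there is a genuine gap in the mechanism by which the gauge helps, and it concerns precisely the Lorentz exponents.

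You propose to close the estimate via ``Lorentz--H\"older duality $L^{(n/s,2)} \cdot L^{(n/(n-s),\infty)} \subset L^{(1,2)}$.'' This inclusion is correct as a product estimate, but $L^{(1,2)}$ is \emph{strictly larger} than $L^1$ (recall $L^{(1,q_1)} \subset L^{(1,q_2)}$ for $q_1 \leq q_2$, so $L^1 = L^{(1,1)} \subsetneq L^{(1,2)}$); functions in $L^{(1,2)}$ need not be integrable, so you cannot pair the product against an $L^\infty$ test function. Consequently the duality you invoke does not produce a finite quantity, and requiring merely $\|\laps{s}P\|_{(n/s,2)} \lesssim \|\Omega\|_{(n/s,2)}$ is not what makes the argument close. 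The key point in the paper, encoded in Theorem~\ref{th:gauge}, is a gain of one Lorentz secondary exponent: the gauge is chosen so that the \emph{gauged potential} $\laps{s}P + P\Omega$ lands in $L^{(n/s,1)}$, not merely in $L^{(n/s,2)}$. This gain comes from the three-commutator identity $\laps{s}P + P\Omega = \tfrac12 H_s(P,P^T)P$ together with the bilinear estimate $\|H_s(P,P^T)\|_{(n/s,1)} \lesssim \|\laps{s}P\|_{(n/s,2)}^2$ from Lemma~\ref{reghs} --- the ``square'' of the exponent~$2$ produces a~$1$. Since $L^{(n/s,1)}$ is the predual of $L^{(n/(n-s),\infty)}$, the product $(\laps{s}P + P\Omega)\,w$ then lies in $L^1$, and pairing against test functions with $\|\laps{s}\varphi\|_{(n/s,1)}\leq 1$ (so that one recovers the $L^{(n/(n-s),\infty)}$-norm of $Pw$ by taking a supremum) makes the decay estimate well defined.

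Your vaguer phrase about a ``fractional divergence-free form'' or compensated-compactness structure for $P\Omega P^{-1} - (\laps{s}P)P^{-1}$ points to a related but different route (Hardy--BMO duality, as in the div-curl approach of Mazowiecka--Schikorra). That route can also work, but it would then need a Hardy-space bound on the gauged potential rather than the Lorentz gain, and your write-up does not supply that either; as stated, the duality step is the hole. Once the $L^{(n/s,1)}$ bound on $\laps{s}P + P\Omega$ is in hand, the rest of your outline (testing, absorbing the tails using the geometrically summable weights $2^{-k\sigma}$ from the $E$-estimate, and iterating \`a la Campanato) is sound and is essentially what the paper does.
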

Proposition~\ref{antisympotreg} implies in particular, that solutions of 
\[
 \laps{s}w^i = \Omega_{ij} w^j 
\]
improve their integrability when $\Omega \in L^{(\frac{n}{s},1)}$ without any antisymmetry assumption. Indeed, then $\Omega_{ij} w^j$ satisfies the conditions of $E_i$. This special case is related  to the Lipschitz regularity of solutions of 
\[
 \div(|\nabla u|^{n-2}\nabla u) = \Omega |\nabla u|^{n-2}\nabla u
\]
under the assumption that $\Omega \in L^{(n,1)}$, which was proven by Duzaar and Mingione, \cite{Duzaar-Mingione-2010}.

Let us also remark, that in the local case, i.e. for $s = 2$ and $n=2$, the assumption of Proposition~\ref{antisympotreg} are not optimal: Rivi\`ere showed in \cite{Riviere-2011} that in that case $\Omega_{ji} \in L^{(\frac{n}{s},\frac{n}{s})}$ suffices to improve integrability. Nevertheless, observe that for $n=1$ and $s = \frac{1}{2}$ we recover the regularity Theorem by Rivi\`{e}re and the first author \cite{DLR2}. Also, for $\frac{n}{s} < 2$ our assumptions are weaker than $\Omega \in L^{\frac{n}{s}}(\R^n)$.\par
The paper is organized as follows. In Section~\ref{defnot} we introduce some preliminary definitions and notations. Section~\ref{s:proofpr:reformulation} is devoted to the proof of Proposition~\ref{pr:reformulation}.
In Section~\ref{s:goodgauge} we show how to perform a {\em change of gauge} in a system of the form \eqref{schro}. In Section~\ref{s:proofantisympotreg} we prove  Proposition \ref{antisympotreg}.

\section{Preliminaries: function spaces and the fractional Laplacian}\label{defnot}
In this Section we introduce some  notations and definitions that are used in the paper.
 
For $n\ge 1$,  we denote respectively by ${\mathcal{S}}(\R^n)$ and  ${\mathcal{S}}^{\prime}(\R^n)$ the spaces of Schwartz functions and tempered distributions. 

Given a function $v$ we will denote either by
  $\hat v$ or by  ${\mathcal{F}}[v]$ the Fourier Transform of  $v$ :
  $$\hat v(\xi)={\mathcal{F}}[v](\xi)=\int_{\R^n}v(x)e^{-i \langle \xi, x\rangle }\,dx\,.$$
   
  We introduce the following topological subspace of ${\mathcal{S}}(\R^n):$
  $$
  {\mathcal{Z}}(\R^n)=\{\varphi\in {\mathcal{S}}(\R^n): (D^{\alpha}{\mathcal{F}}[v])(0)=0, ~~\mbox{ for every multi-index $ \alpha$}\}.$$
  Its topological dual $  {\mathcal{Z}}'(\R^n)$ can be identified with the quotient  space ${\mathcal{S}}'(\R^n)/{\mathcal{P}}(\R^n)$
  where ${\mathcal{P}}(\R^n)$ is the collection of all polynomials, (see e.g. \cite{Tr83}).

 Given $q>1$ and $s\in\R$ we also set
  
 $$
 \dot  H^{s,q}(\R^n):=\{v\in {\mathcal{Z}}^{\prime} (\R^n):~~{\mathcal{F}}^{-1}[|\xi|^s{\mathcal{F}}[v]]\in L^q(\R^n)\}\,.
 $$
For a submanifold  ${\mathcal{N}}$ of $\R^m$  we can define   
 $$ \dot{H}^{s,q}(\R^n,{\mathcal{N}})=\{u\in\dot  H^{s,q}({\R}^n,\R^m):~~u(x)\in {\mathcal{N}}, {\rm a.e.}\}\,.
 $$
 
Finally we denote    ${\mathcal{H}}^1(\R^n)$  the homogeneous Hardy Space in $\R^n$.    \par
  
  We recall that  if $sp=n$ then
 \begin{equation} 
 \dot{H}^{s,p}(\R^n)\hookrightarrow BMO({\R^n}),\end{equation}
 where $BMO({\R^n})$ is the space of bounded mean oscillation dual to ${\mathcal{H}}^1(\R^n)$.
 \par 
 The $s$-fractional  Laplacian of a function  $u\colon\R^n\to\R$ is defined as a pseudo differential operator of symbol $|\xi|^{2s}$ :
 \begin{equation}\label{fract}
 \widehat { ( {-\Delta})^{s}u}(\xi)=|\xi|^{2s} \hat u(\xi)\,.
 \end{equation}
 For every $\sigma\in(0,n)$ we denote by $I^{\sigma}$ the Riesz Potential, that is
 $$I^{\sigma}f(x):=c_s\int_{\R^n}\frac{f(z)}{|x-z|^{n-s} }dz.$$
 Finally we introduce the definition of Lorentz spaces (see for instance Grafakos's monograph \cite{Gra1} for a complete presentation of such spaces).
For  $1\le p<+\infty,   1\le q\le +\infty$, the Lorentz space  $L^{(p,q)}(\R^n)$  is the set 
     of measurable functions satisfying
$$
 \left\{\begin{array}{ll}
\int_{0}^{+\infty}(t^{1/p}f^*(t))^{q}\frac{dt}{t}<+\infty, & ~\mbox{if $q<\infty,~p<+\infty$}\\[5mm]
\sup_{t>0}t^{1/p}f^*(t)<\infty & ~\mbox{if $q=\infty,~p<\infty$}\,,\end{array}\right.
 $$
where $f^*$ is the decreasing rearrangement of $|f|\,.$\par
We observe that $L^{p,\infty}(\R^n)$ corresponds to the weak $L^p$ space.
 Moreover for $1<p<+\infty, 1\le q\le\infty$  the dual space of $L^{(p,q)}$ is $L^{(\frac{p}{p-1},\frac{q}{q-1})}$ if $q>1$ and it is 
 $L^{(\frac{p}{p-1},\infty)}$ if $q\le 1\,.$
 \par
Let us define
$$\dot  H^{s,(p,q)}(\R^n)=\{ v\in {\mathcal{Z}}^{\prime} (\R^n):~~{\mathcal{F}}^{-1}[|\xi|^s{\mathcal{F}}[v]]\in L^{(p,q)}(\R^n)\}\,.$$
 
 In the sequel we will   often use
  the   H\"older inequality in the  Lorentz spaces: if $f\in L^{p_1,q_1}, g\in L^{p_2,q_2}$,  with $1\le p_1,p_2,q_1,q_2\le +\infty$. Then
$fg\in L^{r,s},$ with $r^{-1}={p_1}^{-1}+{p_2}^{-1}$ and $s^{-1}={q_1}^{-1}+{q_2}^{-1}\,,$ (see for instance \cite{Gra1}).\par

To conclude we introduce some basic notation.

    $B(\bar x,r)$ is the ball of radius $r$ and centered at $\bar x$. If $\bar x=0$ we simply write
  $B_r$\,. If $x,y\in\R^n,$ $x\cdot y$ is the scalar product between $x,y$\,.

 Given a {\em multiindex}  
$\alpha=(\alpha_1,\ldots,\alpha_n)$,  where $\alpha_i$ is a
nonnegative integer, we denote by  $|\alpha|=\alpha_1+\ldots+\alpha_n\,$ the order of $\alpha$.  \par

 Given $q>1$ we denote by $q^\prime$ the conjugate of $q$: $q^{-1}+{q^{\prime}}^{-1}=1\,.$\par
  In the sequel we will often use the symbols  $a \aleq b$ and $a \aeq b$ instead of $a \le C\, b$ and $C^{-1} b \leq a \leq C b$, respectively, whenever the multiplicative constants $C$ appearing in the estimates are not relevant for the computations and therefore they are omitted.
 
\section{Rewriting the Euler-Lagrange equations: Proof of Proposition~\ref{pr:reformulation}}\label{s:proofpr:reformulation}
For a fixed manifold $\mathcal{N}$ and $p \in \mathcal{N}$ we denote by 
$\Pi(p)$ the projection onto the tangent plane $T_p \mathcal{N}$, and by $\Pi^\perp(p) = I-\Pi(p)$ the projection onto the normal space $(T_p \mathcal{N})^\perp$.

For $s > 0$ we first introduce the following three-term commutator
\begin{equation}\label{hs}
H_s(f,g)=\laps{s} (fg)-\laps{s} f g-f\laps{s} g.
\end{equation}
Such a commutator has been used for the first time in \cite{DLR1} in the case $s=\frac{1}{2}$ in the context of $1/2$-harmonic maps (see also \cite{Schikorra-n2}).  
It represents the  error term of the Leibniz rule for $\laps{s}.$  We recall here some estimates of \eqref{hs} for general $s > 0$ that we will use in the sequel, ( see e.g. \cite{Schikorra-eps,Lenzmann-Schikorra-2016}).
\begin{lemma}\label{reghs}
Let $s \in (0,1]$, For any $t \in (0,s)$, $p,p_1,p_2 \in (1,\infty)$, $q,q_1,q_2 \in [1,\infty]$ such that
\[
 \frac{1}{p} = \frac{1}{p_1} + \frac{1}{p_2}, \quad \frac{1}{q} = \frac{1}{q_1} + \frac{1}{q_2},
\]
it holds that
\[
\|H_{s} (f, \varphi)\|_{L^{(p,q)}(\R^n)} \aleq \|\laps{s-t} f\|_{L^{(p_1,q_1)}(\R^n)}\ \|\laps{t} \varphi\|_{L^{(p_2,q_2)}(\R^n)},
\]
\end{lemma}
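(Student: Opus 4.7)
The plan is to estimate the commutator pointwise by a double integral, decouple the two factors via Cauchy--Schwarz, and then appeal to Strichartz's square function characterization of fractional Sobolev spaces together with H\"older's inequality in Lorentz spaces.

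Since $s \in (0,1] \subset (0,2)$, I would start from the singular integral representation
\[
 \laps{s} u(x) = c_{n,s}\, \mathrm{P.V.} \int_{\R^n} \frac{u(x) - u(y)}{|x-y|^{n+s}}\, dy.
\]
Applying this to $fg$, $f$, and $\varphi$ separately and telescoping $f(x)g(x) - f(y)g(y)$ through the intermediate term $f(x)\varphi(y)$, a short algebraic manipulation yields
\[
 H_s(f,\varphi)(x) = -c_{n,s} \int_{\R^n} \frac{(f(x)-f(y))(\varphi(x) - \varphi(y))}{|x-y|^{n+s}}\, dy,
\]
with the integral absolutely convergent for nice $f, \varphi$ because the product of first order differences controls the singularity at $y = x$. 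Splitting the weight as $|x-y|^{n+s} = |x-y|^{n/2 + (s-t)} \cdot |x-y|^{n/2 + t}$ and applying Cauchy--Schwarz in $y$ gives
\[
 |H_s(f,\varphi)(x)| \aleq \cJ_{s-t}[f](x) \cdot \cJ_t[\varphi](x), \qquad \cJ_\alpha[h](x) := \left( \int_{\R^n} \frac{|h(x) - h(y)|^2}{|x-y|^{n + 2\alpha}}\, dy \right)^{1/2}.
\]

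The next step is Strichartz's square function bound: for $\alpha \in (0,1)$ and $r \in (1,\infty)$ one has $\|\cJ_\alpha[h]\|_{L^r} \aeq \|\laps{\alpha}h\|_{L^r}$. Since $h \mapsto \cJ_\alpha[h]$ is sublinear, real interpolation between two $L^r$ endpoints upgrades this to the Lorentz scale,
\[
 \|\cJ_\alpha[h]\|_{L^{(r,u)}(\R^n)} \aleq \|\laps{\alpha}h\|_{L^{(r,u)}(\R^n)}, \qquad r \in (1,\infty),\; u \in [1,\infty].
\]
Note that both $s-t$ and $t$ lie in $(0,1)$, since $t \in (0,s)$ and $s \leq 1$, so Strichartz does apply to each factor. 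Combining the pointwise bound with H\"older's inequality in Lorentz spaces closes the argument:
\[
 \|H_s(f,\varphi)\|_{L^{(p,q)}} \aleq \|\cJ_{s-t}[f]\|_{L^{(p_1,q_1)}}\, \|\cJ_t[\varphi]\|_{L^{(p_2,q_2)}} \aleq \|\laps{s-t}f\|_{L^{(p_1,q_1)}}\, \|\laps{t}\varphi\|_{L^{(p_2,q_2)}}.
\]

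The main obstacle is the Lorentz-space upgrade of the Strichartz inequality. In the $L^r$ scale it is classical, but promoting it to $L^{(r,u)}$ requires applying real interpolation to the sublinear operator $\cJ_\alpha$, or equivalently linearizing via a Littlewood--Paley dyadic decomposition and running the interpolation there. This Lorentz extension is precisely what is worked out in \cite{Schikorra-eps,Lenzmann-Schikorra-2016}; once granted, the pointwise Cauchy--Schwarz plus H\"older scheme above is routine.
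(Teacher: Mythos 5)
The paper does not actually prove Lemma~\ref{reghs}; it is recalled and cited from \cite{Schikorra-eps,Lenzmann-Schikorra-2016}, so there is no internal proof to compare against. Your pointwise identity $H_s(f,\varphi)(x)=-c_{n,s}\int \frac{(f(x)-f(y))(\varphi(x)-\varphi(y))}{|x-y|^{n+s}}\,dy$ is correct and the Cauchy--Schwarz reduction to the two Gagliardo-type square functions $\cJ_{s-t}[f]$ and $\cJ_t[\varphi]$ is clean. However, the step you rely on afterwards is false as stated: the bound
\[
\|\cJ_\alpha[h]\|_{L^{r}(\R^n)} \aleq \|\laps{\alpha}h\|_{L^{r}(\R^n)}
\]
does \emph{not} hold for all $r\in(1,\infty)$. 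Take $h\in C_c^\infty(B_1)$ with $\int h\neq 0$. Then for $|x|\geq 2$ one has $\laps{\alpha}h(x)\aeq |x|^{-(n+\alpha)}$, so $\laps{\alpha}h\in L^r$ for every $r>\frac{n}{n+\alpha}$, in particular for all $r>1$; but $\cJ_\alpha[h](x)\aeq |x|^{-(n+2\alpha)/2}$ for $|x|\geq 2$, so $\cJ_\alpha[h]\in L^r$ only for $r>\frac{2n}{n+2\alpha}$. Since $1<\frac{2n}{n+2\alpha}<2$ whenever $\alpha<n/2$ (which always occurs for some admissible $\alpha=t$ or $\alpha=s-t$), the inequality fails for a nontrivial range of $r$. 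Consequently the real-interpolation upgrade to Lorentz scales cannot go through either, because you have no valid $L^r$ endpoint below $\frac{2n}{n+2\alpha}$. Your proof therefore establishes the lemma only for $p_1,p_2$ roughly at or above $2$ (which, incidentally, does cover every application in this paper, where $p_1=n/(s-t)$, $p_2=n/t$ with the free parameter $t$ adjustable), but it does not prove the lemma for the full claimed range $p_1,p_2\in(1,\infty)$.

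The correct proof in the cited references goes a different route: \cite{Lenzmann-Schikorra-2016} uses the Caffarelli--Silvestre/Poisson harmonic extension and commutator identities upstairs, while the Littlewood--Paley/paraproduct approach (Bony decomposition with separate treatment of high-low, low-high, and high-high interactions) also works. Both avoid passing through the Gagliardo square function and thereby reach exponents $p_1,p_2$ all the way down to $1$. If you want to salvage your scheme for small $p_i$, one could replace Cauchy--Schwarz in $y$ by H\"older with exponent $r_1$ close to $p_1$ and dual exponent $r_2$ on the other factor, but then one needs $L^{p_i}$ bounds for the $r_i$-variant $\brac{\int |h(x)-h(y)|^{r_i}|x-y|^{-n-\alpha r_i}\,dy}^{1/r_i}$, which run into the Besov--Triebel--Lizorkin embedding obstruction $\dot H^{\alpha,p}\hookrightarrow \dot W^{\alpha,p}$ only for $p\geq 2$; so this does not straightforwardly close the gap either.
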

\begin{lemma}\label{reghs2}
Let $s \in (0,1]$, $p \in (1,\infty)$, $p' = \frac{p}{p-1}$, $q \in [1,\infty]$, $q' = \frac{q}{q-1} \in [1,\infty]$. Then for any $a,b \in C_c^\infty(\R^n)$,
\begin{equation}\label{eq:Hsbmoest}
\int_{\R^n} H_s(a,b)\, \laps{s} \varphi \aleq [\varphi]_{BMO}\, \|\laps{s} a\|_{L^{(p,q)}(\R^n)}\|\laps{s} b\|_{L^{(p',q')}(\R^n)}.
\end{equation}
In particular, by the duality of Hardy-space $\mathcal{H}^1$ and BMO,
\[
\|\laps{s} \brac{ H_s(a,b)}\|_{\mathcal{H}^1} \aleq \|\laps{s} a\|_{L^{(p,q)}}\|\laps{s} b\|_{L^{(p',q')}}.
\]
\end{lemma}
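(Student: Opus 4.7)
The plan is to reduce first to the BMO-pairing estimate \eqref{eq:Hsbmoest}: the self-adjointness of $\laps{s}$ on Schwartz functions gives $\int_{\R^n} H_s(a,b)\,\laps{s}\varphi = \int_{\R^n} \laps{s}(H_s(a,b))\,\varphi$, so \eqref{eq:Hsbmoest} says precisely that $\laps{s}(H_s(a,b))$ pairs with $BMO$ functions (modulo constants) with norm controlled by $\|\laps{s}a\|_{L^{(p,q)}}\|\laps{s}b\|_{L^{(p',q')}}$. Since $\mathcal{H}^1$ is the predual of $BMO$, this is equivalent to the second displayed estimate, and therefore it suffices to prove \eqref{eq:Hsbmoest}.

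For \eqref{eq:Hsbmoest} I would use a Littlewood--Paley paraproduct decomposition $ab = \Pi_a b + \Pi_b a + R(a,b)$ into low-high, high-low and high-high frequency interactions. Substituting into $H_s(a,b) = \laps{s}(ab) - a\,\laps{s}b - b\,\laps{s}a$, the principal symbols in the off-diagonal paraproducts cancel against $a\,\laps{s}b$ and $b\,\laps{s}a$, leaving commutators of the schematic form $\sum_j [S_{j-N}a,\laps{s}]\Delta_j b$ and $\sum_j [S_{j-N}b,\laps{s}]\Delta_j a$, which effectively gain a derivative thanks to the symbol-smoothness of $\laps{s}$ on a frequency-localized block. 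The resonant piece $\laps{s} R(a,b) = \laps{s}\sum_k \Delta_k a\, \widetilde\Delta_k b$ is already balanced in frequency and admits an atomic $\mathcal{H}^1$-decomposition. Testing each piece against $\laps{s}\varphi$ and summing via Hölder's inequality in Lorentz spaces should yield the bound.

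The main obstacle will be the off-diagonal contributions, because a crude Hölder argument on them would produce $\|\varphi\|_\infty$ rather than $[\varphi]_{BMO}$, which is insufficient. Here one must really exploit the cancellation built into $H_s$ by invoking the fractional Coifman--Rochberg--Weiss commutator theorem: the commutator $b \mapsto [\laps{s}, a]\, b$ maps $\dot H^{s,p'}$ into $\mathcal{H}^1$ whenever $\laps{s}a \in L^p$, and this is precisely the mechanism by which the $L^\infty$-bound is replaced by the $BMO$ seminorm in \eqref{eq:Hsbmoest}. Once this Lebesgue endpoint is in hand, the Lorentz-space refinement follows either by repeating the paraproduct argument with Lorentz versions of Hölder's inequality and of the commutator bound (all of the ingredients admit Lorentz analogues), or by bilinear real interpolation in the pair $(\|\laps{s}a\|,\|\laps{s}b\|)$.
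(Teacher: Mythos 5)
The paper does not actually prove Lemma~\ref{reghs2}: it is recalled as a known estimate with a pointer to the references \cite{Schikorra-eps} and \cite{Lenzmann-Schikorra-2016}. The latter derives such commutator bounds via the harmonic-extension representation of $\laps{s}$, while your Littlewood--Paley paraproduct sketch is closer in spirit to the original argument of Da Lio--Rivi\`ere in \cite{DLR1} for $s=\tfrac12$. So the route you propose is a genuinely different one from the cited source, and in principle both decompositions (paraproducts vs.\ harmonic extensions) can yield the result; the paraproduct version is more elementary from the harmonic-analysis side but requires more careful bookkeeping of frequency interactions, whereas the harmonic-extension proof localizes the nonlocality in a cleaner way and is better adapted to the full $s\in(0,1]$ range and to Lorentz refinements.

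That said, your sketch has a concrete gap at the key step. You invoke a ``fractional Coifman--Rochberg--Weiss theorem'' asserting that $b\mapsto[\laps{s},a]b$ maps $\dot H^{s,p'}$ into $\mathcal H^1$ whenever $\laps{s}a\in L^p$. Two problems: first, this is not a standard off-the-shelf result --- CRW (\cite{Coifman-Rochberg-Weiss-76}, the paper's Lemma~\ref{reghs3}) concerns commutators $[\Rz_i,f]$ with Riesz transforms, which are Calder\'on--Zygmund operators, and the extension of the $\mathcal H^1$ weak-factorization direction to $\laps{s}$, which is \emph{not} a CZO for $s>0$, would itself require a proof comparable in difficulty to the lemma. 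Second, the object that the lemma places in $\mathcal H^1$ is $\laps{s}\bigl(H_s(a,b)\bigr)$, i.e. $\laps{s}\bigl([\laps{s},a]b - b\,\laps{s}a\bigr)$, not $[\laps{s},a]b$ itself; the extra outer $\laps{s}$ and the subtracted symmetrization $b\,\laps{s}a$ are exactly what makes $H_s$, rather than the ordinary commutator, the right object here (the plain two-term commutator does \emph{not} land in $\mathcal H^1$ under the stated hypotheses). So as written the crucial cancellation step is not actually justified; you would need to carry out the paraproduct decomposition of $\laps{s}H_s(a,b)$ explicitly --- showing the off-diagonal pieces gain a derivative from symbol smoothness and the resonant piece has the atomic/mean-zero structure --- rather than outsource it to an unproved black box.

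Your duality reduction at the start (self-adjointness of $\laps{s}$ plus $\mathcal H^1$--$BMO$ duality to pass between the integral estimate and the $\mathcal H^1$ bound) is correct and is the standard opening move.
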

We will recall the following result
\begin{lemma}[Coifman-Rochberg-Weiss \cite{Coifman-Rochberg-Weiss-76}]\label{reghs3}
  For any smooth and compactly supported
$f,g\in C^{\infty}_c(\R^n)$ and any $i = 1, \ldots,n$ we define the commutator
  $$ [\Rz_i,f](g)=\Rz_i(fg)-f\Rz_i(g)$$
 
Then for $p>1$ there is constant $C >0$ (depending on $p,n$) such that
$$\|\Rz_i,f](g)\|_{L^p(\R^n)}\leq C\|f\|_{BMO(\R^n)}\|g\|_{L^p(\R^n)}.$$
  \end{lemma}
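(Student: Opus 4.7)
The plan is to follow the classical sharp-maximal-function approach of Str\"omberg. The target is the pointwise estimate
$$M^{\#}\!\bigl([\Rz_i, f]g\bigr)(x) \aleq \|f\|_{BMO}\, M_r g(x) \qquad (\text{for some fixed } r > 1),$$
where $M^{\#}$ is the Fefferman-Stein sharp maximal function and $M_r g = (M(|g|^r))^{1/r}$. Once this pointwise bound is in hand, the lemma follows at once from the Fefferman-Stein inequality $\|h\|_{L^p} \aleq \|M^{\#} h\|_{L^p}$ (valid for $1 < p < \infty$ on suitably decaying $h$) combined with the Hardy-Littlewood bound $\|M_r g\|_{L^p} \aleq \|g\|_{L^p}$ for any $r < p$.

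To establish the pointwise bound, I would fix a ball $B = B(x_0, \rho)$ containing $x$ and decompose $f = (f - f_B) + f_B$. Since constants commute through $\Rz_i$, one can replace $f$ by $f - f_B$, yielding
$$[\Rz_i, f]g \;=\; (f - f_B)\, \Rz_i g \;-\; \Rz_i\bigl((f - f_B) g\bigr).$$
Splitting $g = g_1 + g_2$ with $g_1 = g \chi_{2B}$ and subtracting the constant $c_B := -\Rz_i((f - f_B)g_2)(x_0)$ reduces the computation of the sharp maximal function to estimating $\oint_B |\,\cdot\, - c_B|$ of the three resulting pieces.

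The three pieces are handled by standard ingredients. For $(f - f_B)\, \Rz_i g$: H\"older together with the John-Nirenberg inequality $\bigl(\oint_B |f - f_B|^{r'}\bigr)^{1/r'} \aleq \|f\|_{BMO}$ leaves $M_s(\Rz_i g)(x)$ for some $s > 1$, which by the $L^s$-boundedness of $\Rz_i$ (applied after an auxiliary splitting of $g$ near and far from $B$) is $\aleq M_r g(x)$ for $r$ slightly larger than $s$. For $\Rz_i\bigl((f - f_B) g_1\bigr)$: Kolmogorov's inequality built on the weak $(1,1)$ bound of $\Rz_i$ yields $\bigl(\oint_B |\Rz_i((f - f_B)g_1)|^\delta\bigr)^{1/\delta} \aleq \oint_{2B}|f - f_B||g|$, and H\"older plus John-Nirenberg again produce $\|f\|_{BMO}\, M_r g(x)$. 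For the far-field difference $\Rz_i((f - f_B) g_2)(y) - \Rz_i((f - f_B) g_2)(x_0)$ with $y \in B$: the standard Calder\'on-Zygmund kernel smoothness $|K_i(y,z) - K_i(x_0,z)| \aleq \rho/|x_0 - z|^{n+1}$ reduces the estimate to a sum over dyadic annuli $2^{k+1}B \setminus 2^k B$ of the form $\sum_k 2^{-k}\, |f_{2^k B} - f_B|\, \oint_{2^{k+1}B}|g|$.

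The main obstacle is this last dyadic sum: one needs the classical BMO estimate $|f_{2^k B} - f_B| \aleq k \|f\|_{BMO}$ to beat the geometric decay $2^{-k}$ from kernel smoothness, producing a convergent series $\sum_k k\, 2^{-k}$ times $\|f\|_{BMO}\, M g(x) \aleq \|f\|_{BMO}\, M_r g(x)$. Once all three pieces are absorbed into $\|f\|_{BMO}\, M_r g(x)$, the sharp-maximal bound is proved, and the $L^p$ conclusion follows immediately by Fefferman-Stein. Alternatively, one can bypass the sharp maximal function via the $\mathcal{H}^1$-$BMO$ duality and a direct atomic decomposition argument for the predual inequality $\bigl|\int g\,[\Rz_i,f]h\bigr| \aleq \|f\|_{BMO}\|g\|_{L^{p'}}\|h\|_{L^{p}}$, which would be the natural alternative if one preferred to avoid maximal-function bookkeeping.
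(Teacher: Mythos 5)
The paper does not prove this lemma at all; it is recalled as a known result and attributed to Coifman--Rochberg--Weiss \cite{Coifman-Rochberg-Weiss-76}, which is standard practice for such a classical fact. So there is no in-paper argument to compare yours against. Your proposal is a correct reconstruction of the now-standard Str\"omberg-style proof via the Fefferman--Stein sharp maximal function: replace $f$ by $f-f_B$ using that constants kill the commutator, split $g$ into local and far parts, control the local piece by Kolmogorov's inequality built on the weak $(1,1)$ bound, and control the far piece by kernel smoothness together with the logarithmic growth $|f_{2^kB}-f_B|\aleq k\|f\|_{BMO}$. That last point is indeed the only place where something nontrivial about $BMO$ is needed beyond John--Nirenberg, and you correctly identify it as the crux. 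Two minor remarks. First, in the term $(f-f_B)\Rz_i g$, the cleanest bookkeeping is to keep the bound as $\|f\|_{BMO}\,M_s(\Rz_i g)(x)$ and only at the end use $\|M_s(\Rz_i g)\|_{L^p}\aleq\|\Rz_i g\|_{L^p}\aleq\|g\|_{L^p}$ for $s<p$; folding it into $M_r g$ via an ``auxiliary splitting'' works but is needlessly delicate. Second, the Fefferman--Stein inequality $\|h\|_{L^p}\aleq\|M^{\#}h\|_{L^p}$ requires a qualitative hypothesis such as $h\in L^{p_0}$ for some $p_0$ or $Mh<\infty$ a.e.; since $g\in C_c^\infty$, one has $[\Rz_i,f]g\in L^p$ a priori by boundedness of $\Rz_i$ on $L^p$ (for $f$ bounded, then remove the boundedness by a limiting argument), so this is fine but deserves a sentence. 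Your alternative route through $\mathcal H^1$--$BMO$ duality is closer in spirit to the original Coifman--Rochberg--Weiss argument, which proved both directions of the equivalence. As a citation-level check, the proposal is sound.
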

We will use the following  extension of Lemma \ref{reghs3}.
\begin{lemma}[Theorem 6.1 in \cite{Lenzmann-Schikorra-2016}]\label{reghs4}
Let $s \in (0,1]$ and $p \in (1,\infty)$ and $q,q_1,q_2 \in [1,\infty]$ with $\frac{1}{q} = \frac{1}{q_1} + \frac{1}{q_2}$. Then, for $f,g \in C_c^\infty(\R^n)$,
and for $p_1,p_2,p \in (1,\infty)$, $\frac{1}{p_1} + \frac{1}{p_2} = \frac{1}{p}$, $\sigma \in [s,1)$,
\begin{equation}\label{eq:lapsgfglapsfintermediate}
\| [\laps{s}, g](f)\|_{L^{(p,q)}(\R^n)} \aleq \|\laps{\sigma} g\|_{L^{(p_1,q_1)}(\R^n)}\ \|\lapms{\sigma-s} f \|_{L^{(p_2,q_2)}(\R^n)}.
\end{equation}
\eqref{eq:lapsgfglapsfintermediate} remains valid if one replaces $\laps{s}$ by $\Rz_i \laps{s}$, where $\Rz_i$ is the $i$th Riesz transform.

Also, for $p_1,p_2,p \in (1,\infty)$, $\frac{1}{p_1} + \frac{1}{p_2} = \frac{1}{p}$, $\sigma \in [0,1)$,
\begin{equation}\label{eq:rieszgfintermediate}
 \| [\Rz_i, g] f \|_{L^{(p,q)}(\R^n)} \aleq \|\laps{\sigma} g\|_{L^{(p_1,q_1)}(\R^n)}\ \|\lapms{\sigma} f\|_{L^{(p_2,q_2)}(\R^n)}.
\end{equation}
\end{lemma}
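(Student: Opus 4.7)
The plan is to prove each of the three commutator estimates by a pointwise reduction of Hedberg type, and then to convert the resulting pointwise bound into the Lorentz estimate via H\"older's inequality and the Hardy-Littlewood-Sobolev embedding in Lorentz spaces.

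For $[\laps{s},g](f)$ with $s\in(0,1)$, I would start from the singular-integral representation of $\laps{s}$ to obtain the pointwise identity
\[
 [\laps{s},g](f)(x)=c_{n,s}\int_{\R^n}\frac{(g(x)-g(y))\,f(y)}{|x-y|^{n+s}}\,dy,
\]
and then invoke the pointwise Hedberg-type mean-value inequality
\[
 |g(x)-g(y)|\ \aleq\ |x-y|^{\sigma}\bigl(M(\laps{\sigma}g)(x)+M(\laps{\sigma}g)(y)\bigr),\qquad \sigma\in(0,1),
\]
where $M$ is the Hardy-Littlewood maximal operator. Substituting yields the pointwise domination
\[
 |[\laps{s},g](f)(x)|\ \aleq\ M(\laps{\sigma}g)(x)\,\lapms{\sigma-s}|f|(x)\ +\ \lapms{\sigma-s}\bigl(M(\laps{\sigma}g)\,|f|\bigr)(x).
\]
H\"older in Lorentz spaces, the boundedness of $M$ on $L^{(r,\rho)}$ for $r\in(1,\infty)$, and the Lorentz-refined Hardy-Littlewood-Sobolev embedding $\lapms{\sigma-s}\colon L^{(r,\rho)}\to L^{(r^\ast,\rho)}$ (with $1/r^\ast=1/r-(\sigma-s)/n$) give the announced bound
\[
 \|[\laps{s},g](f)\|_{L^{(p,q)}}\aleq\|\laps{\sigma}g\|_{L^{(p_1,q_1)}}\|\lapms{\sigma-s}f\|_{L^{(p_2,q_2)}}.
\]

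The two Riesz-transform variants follow the same pattern. For $[\Rz_i\laps{s},g](f)$ the Riesz transform is absorbed into the kernel, yielding an odd first-order Calder\'on-Zygmund kernel of order $n+s+1$ to which the same Hedberg trick applies. For $[\Rz_i,g]f$ one uses the standard kernel of $\Rz_i$ directly, so that $\sigma$ now ranges over $[0,1)$ and the entire gain comes from the smoothness of $g$ rather than from a fractional order $s$. The case $s=1$ in the first estimate is reached by a limit $s\uparrow 1$ or by treating $|\nabla|$ through Calder\'on-Zygmund theory.

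The main technical obstacle is the endpoint $\sigma=s$ (and, for the Riesz commutator, $\sigma=0$), where the Hardy-Littlewood-Sobolev gain disappears because $\lapms{0}$ is the identity and the Hedberg inequality trivializes. Here the pointwise argument must be replaced by an $\mathcal{H}^1$--$BMO$ duality estimate in the Coifman-Rochberg-Weiss spirit (Lemma~\ref{reghs3} being exactly the endpoint $s=\sigma=0$), exploiting the vanishing mean of the Calder\'on-Zygmund kernel to recover boundedness from $\|\laps{s}g\|_{L^{(p_1,q_1)}}$. A secondary subtlety is the full range of Lorentz secondary exponents $q_1,q_2,q$ with $1/q_1+1/q_2=1/q$: if the pointwise argument is too rigid for extreme choices, one closes the gap by bilinear real interpolation in the form of Grafakos-Kalton or Janson, using that the Lebesgue version of the estimate already holds on an open set of triples $(p_1,p_2,p)$ satisfying the scaling.
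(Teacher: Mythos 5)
First, note that the paper itself offers no proof of Lemma~\ref{reghs4}: it is imported verbatim as Theorem~6.1 of \cite{Lenzmann-Schikorra-2016}, where it is proved via harmonic extensions to the upper half-space (a square-function/paraproduct-type argument), not via pointwise kernel bounds. Your proposal is therefore an independent attempt, and it has a genuine gap at its central step.

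The gap is the passage from the Hedberg-type pointwise bound to the stated estimate. Your reduction yields
\[
 |[\laps{s},g](f)(x)| \aleq M(\laps{\sigma}g)(x)\, \lapms{\sigma-s}|f|(x) + \lapms{\sigma-s}\big(M(\laps{\sigma}g)\,|f|\big)(x),
\]
and both terms involve $|f|$ rather than $f$. H\"older and Hardy--Littlewood--Sobolev then give
\[
 \|[\laps{s},g](f)\|_{L^{(p,q)}} \aleq \|\laps{\sigma}g\|_{L^{(p_1,q_1)}}\, \|f\|_{L^{(\tilde p_2,q_2)}}, \qquad \tfrac{1}{\tilde p_2}=\tfrac{1}{p_2}+\tfrac{\sigma-s}{n},
\]
which is strictly weaker than \eqref{eq:lapsgfglapsfintermediate}: Sobolev embedding gives $\|\lapms{\sigma-s}f\|_{(p_2,q_2)} \aleq \|f\|_{(\tilde p_2,q_2)}$ but not conversely (take $f_\lambda=\laps{\sigma-s}F_\lambda$ with $F_\lambda$ a fixed bump modulated at frequency $\lambda$; then $\|\lapms{\sigma-s}f_\lambda\|_{(p_2,q_2)}$ is constant while $\|f_\lambda\|_{(\tilde p_2,q_2)}\sim\lambda^{\sigma-s}\to\infty$). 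Equivalently, writing $F=\lapms{\sigma-s}f$, the lemma asserts that $h\mapsto[\laps{s},g](\laps{\sigma-s}h)$ is bounded $L^{(p_2,q_2)}\to L^{(p,q)}$, i.e.\ the commutator gains $\sigma-s$ derivatives in the $f$-slot, paid for by the extra smoothness of $g$; any argument that replaces $f$ by $|f|$ destroys precisely the oscillation that produces this gain. This is why the cited reference works on the Littlewood--Paley/harmonic-extension side rather than with pointwise kernel estimates, and the same defect affects your treatment of $[\Rz_i\laps{s},g]$ and of \eqref{eq:rieszgfintermediate} for $\sigma>0$. A secondary, unresolved point is the endpoint $\sigma=s$ (included in the statement): Coifman--Rochberg--Weiss gives $\|[\Rz_i,g]f\|_p\aleq\|g\|_{BMO}\|f\|_p$, not the H\"older-split bound $\|\laps{s}g\|_{(p_1,q_1)}\|f\|_{(p_2,q_2)}$ with $\frac1p=\frac1{p_1}+\frac1{p_2}$, so a genuine Kato--Ponce-type decomposition is needed there too. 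Your pointwise identity, the Hedberg inequality, and the Lorentz bookkeeping are all fine; what is missing is a decomposition that retains the cancellation in $f$.
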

For a map $u: \R^n \to \mathcal{N}$ any derivative $\partial_\alpha u$ is a tangential vector, i.e. $\partial_\alpha u\in T_u \mathcal{N}$. In particular, $\Pi^\perp(p) \nabla u = 0$. If we replace the gradient $\nabla u$ by $\laps{s} u$ there is no reason for this to be true. However, a certain tangential inclination of $\laps{s} u$ can be measured in the following sense.
\begin{lemma}\label{la:projectionlapu}
Assume that $u \in \dot{H}^{s,p}(\R^n,\mathcal{N})$, where $p=\frac{n}{s} \in (1,\infty)$. Then
\begin{equation}\label{est1}
 \|\Pi^\perp(u) \laps{s} u\|_{(p,q)} \aleq \|\laps{s} u\|_{(p,q_1)} \|\laps{s} u\|_{(p,q_2)},
\end{equation}
whenever $\frac{1}{q} = \frac{1}{q_1} + \frac{1}{q_2}$. 

Moreover, for $p \leq 2$
\begin{equation}\label{est2}
 \||\laps{s} u|^{p-2}\Pi^\perp(u) \laps{s} u\|_{p'} \aleq \|\laps{s} u\|_{(p,\infty)}^{p-1}\ \|\laps{s} u\|_{(p,p)}^{p-1}
\end{equation}

Also the localized versions of the above estimates hold: for some $\sigma =\sigma(s)> 0$, for every $k_0 \in \N$,
\[
\begin{split}
 \|\Pi^\perp(u) \laps{s} u\|_{(p,q),B(x_0,r)} \aleq& \brac{\|\laps{s} u\|_{(p,q_1),B(x_0,2^{k_0}r)} + \sum_{k=k_0}^\infty 2^{-k\sigma} \|\laps{s} u\|_{(p,\infty),B(x_0,2^{k}r)}}\\
 &\cdot\brac{\|\laps{s} u\|_{(p,q_2),B(x_0,2^{k_0}r)} + \sum_{\ell =k_0}^\infty 2^{-\ell \sigma} \|\laps{s} u\|_{(p,\infty),B(x_0,2^{\ell}r)}}.
\end{split}
 \]
and 
\[
\begin{split}
  \||\laps{s} u|^{p-2}\Pi^\perp(u) \laps{s} u\|_{p'} \aleq& \brac{\|\laps{s} u\|^{p-1}_{(p,\infty),B(x_0,2^{k_0}r)} + \sum_{k=k_0}^\infty 2^{-k\sigma} \|\laps{s} u\|^{p-1}_{(p,\infty),B(x_0,2^{k}r)}}\\
 &\cdot\brac{\|\laps{s} u\|^{p-1}_{(p,p),B(x_0,2^{k_0}r)} + \sum_{\ell =k_0}^\infty 2^{-\ell \sigma} \|\laps{s} u\|^{p-1}_{(p,\infty),B(x_0,2^{\ell}r)}},
\end{split}
 \]
\end{lemma}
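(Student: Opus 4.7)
The plan is to work in a smooth local orthonormal normal frame $\{\nu_\alpha\}_{\alpha=1}^{N-k}$ near each point $p_0\in\mathcal{N}$, writing $\Pi^\perp(u)\laps{s}u=\sum_\alpha(\nu_\alpha(u)\cdot\laps{s}u)\,\nu_\alpha(u)$ so that it suffices to estimate each scalar $\nu_\alpha(u)\cdot\laps{s}u$. The key algebraic identity, obtained by applying the three-term commutator $H_s$ from \eqref{hs} to $f=\nu_\alpha(u)$ and $g=u-p_0$ (and using $\laps{s}(u-p_0)=\laps{s}u$), reads
\[
\nu_\alpha(u)\cdot\laps{s}u\;=\;\laps{s}\bigl(g_\alpha(u)\bigr)\;-\;\bigl(\laps{s}\nu_\alpha(u)\bigr)\cdot(u-p_0)\;-\;H_s\bigl(\nu_\alpha(u),u-p_0\bigr),
\]
where $g_\alpha(p):=\nu_\alpha(p)\cdot(p-p_0)$. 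Because $g_\alpha(p_0)=0$ and $Dg_\alpha|_{p_0}(v)=\nu_\alpha(p_0)\cdot v=0$ for $v\in T_{p_0}\mathcal{N}$, the function $g_\alpha$ vanishes to second order at $p_0$, giving $|g_\alpha(p)|\lesssim|p-p_0|^2$ on $\mathcal{N}$ near $p_0$. This quadratic vanishing is the geometric fact that will be converted into the bilinear character of the right-hand side.

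The bilinear term $H_s(\nu_\alpha(u),u-p_0)$ is handled directly by Lemma~\ref{reghs}: taking an auxiliary splitting $t\in(0,s)$ with $p_1=n/(s-t)$ and $p_2=n/t$, a standard fractional chain rule in Lorentz scales gives $\|\laps{s-t}\nu_\alpha(u)\|_{(p_1,q_1)}\aleq\|\laps{s-t}u\|_{(p_1,q_1)}$, and Sobolev-Riesz embedding in Lorentz spaces then lifts both factors to $\|\laps{s}u\|_{(p,q_i)}$, with the required compatibility $1/p_1+1/p_2=s/n=1/p$. For the first two terms I substitute the Taylor expansion $g_\alpha(u)=\tfrac12 D^2g_\alpha|_{p_0}(u-p_0,u-p_0)+O(|u-p_0|^3)$ and apply $H_s$ once more to the leading quadratic factor; the resulting term $2D^2g_\alpha|_{p_0}(u-p_0,\laps{s}u)$ admits an explicit cancellation with the matching piece from $(\laps{s}\nu_\alpha(u))\cdot(u-p_0)$ after rewriting $\laps{s}\nu_\alpha(u)=D\nu_\alpha(u)\,\laps{s}u+(\text{commutator error})$ via Lemma~\ref{reghs4}, thanks to the symmetry of $D^2g_\alpha|_{p_0}$. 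The remaining error terms are of the form $H_s((u-p_0)^i,(u-p_0)^j)$ or commutator residues, both bilinear in $\laps{s}u$, which yields \eqref{est1}.

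The localized versions are proved analogously with $p_0=u(x_0)$, decomposing $u-p_0$ and $\nu_\alpha(u)-\nu_\alpha(p_0)$ along dyadic annuli $A_k=B(x_0,2^kr)\setminus B(x_0,2^{k-1}r)$; the fractional kernels of $\laps{s}$ and $H_s$ produce a geometric decay factor $2^{-k\sigma}$ with $\sigma=\sigma(s)>0$ for each annular contribution, giving the tail sums in the statement. Finally, estimate \eqref{est2} follows from \eqref{est1} by a H\"older inequality in Lorentz spaces together with the pointwise bound $|\Pi^\perp(u)\laps{s}u|\le|\laps{s}u|$, which tames the singular factor $|\laps{s}u|^{p-2}$ appearing when $p\leq 2$. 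The hard part will be the cancellation argument producing the second factor of $\|\laps{s}u\|$ from the naively linear contributions $\laps{s}(g_\alpha(u))$ and $(\laps{s}\nu_\alpha(u))\cdot(u-p_0)$: it depends crucially on the symmetry of $D^2g_\alpha|_{p_0}$ (reflecting the second fundamental form of $\mathcal{N}$ at $p_0$) and on controlling the chain-rule commutator errors via the bilinear estimates of Lemmas~\ref{reghs}--\ref{reghs4}.
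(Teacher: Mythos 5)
Your approach tries to reprove, via Taylor expansion around a fixed base point $p_0$ and a local normal frame, the inequality that the paper simply cites as \eqref{eq:Piperplapsu}, namely $|\Pi^\perp(u)\laps{s}u|\aleq|H_s(u,u)|$ from \cite[Lemma~E.1]{Schikorra-2017}. The proposed cancellation does not close. Writing $Q:=D^2g_\alpha|_{p_0}$, expanding the leading quadratic piece gives $\laps{s}\bigl(\tfrac12 Q(u-p_0,u-p_0)\bigr)\approx Q(u-p_0,\laps{s}u)$ up to $H_s$-remainders, and since $Q(v,w)=D\nu_\alpha|_{p_0}(v)\cdot w + D\nu_\alpha|_{p_0}(w)\cdot v$, this equals $D\nu_\alpha|_{p_0}(u-p_0)\cdot\laps{s}u + D\nu_\alpha|_{p_0}(\laps{s}u)\cdot(u-p_0)$. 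Subtracting $(\laps{s}\nu_\alpha(u))\cdot(u-p_0)\approx D\nu_\alpha|_{p_0}(\laps{s}u)\cdot(u-p_0)$ removes only the second summand and leaves $D\nu_\alpha|_{p_0}(u-p_0)\cdot\laps{s}u$. The symmetry $D\nu_\alpha|_{p_0}(v)\cdot w = D\nu_\alpha|_{p_0}(w)\cdot v$ you invoke (reflecting the second fundamental form) holds only for $v,w\in T_{p_0}\mathcal{N}$, and neither $u-p_0$ nor $\laps{s}u$ is a tangent vector at $p_0$; so the leftover cross-term cannot be traded away. Even formally, that leftover is bounded only by $\|u-p_0\|_\infty\|\laps{s}u\|_{(p,q)}$ --- linear, not bilinear, in $\laps{s}u$, with $\|u-p_0\|_\infty\approx\diam\mathcal{N}$ not small. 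The same defect poisons all the Taylor remainders: $u$ is not a priori continuous (continuity is the \emph{conclusion}), so $|u-p_0|$ is not small on any ball, and the cubic and commutator errors are not subordinate.

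The paper avoids every one of these difficulties by using the pointwise estimate
\[
 |\Pi^\perp(u(x))\laps{s}u(x)|\ \aleq\ \int_{\R^n}\frac{|u(x)-u(y)|^2}{|x-y|^{n+s}}\,dy\ =\ |H_s(u,u)(x)|,
\]
which comes from plugging the quadratic vanishing $|\Pi^\perp(p)(p-q)|\aleq|p-q|^2$, $p,q\in\mathcal{N}$, into the singular-integral kernel of $\laps{s}$ with the \emph{varying} base point $p=u(x)$, rather than a fixed $p_0$. Two further gaps: (a) the decomposition $\Pi^\perp(u)\laps{s}u=\sum_\alpha(\nu_\alpha(u)\cdot\laps{s}u)\nu_\alpha(u)$ requires a global normal frame on $\mathcal{N}$, which need not exist; the paper works directly with the globally smooth matrix-valued map $\Pi^\perp$. (b) The paper treats $s\in(0,1]$ and $s>1$ separately, using for $s>1$ the identity $\Pi^\perp(u)\laps{s}u=[\Pi^\perp(u),\laps{s-1}\Rz_\alpha]\partial_\alpha u$ together with Lemma~\ref{reghs4}; your proposal does not address $s>1$. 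On the positive side, your derivation of \eqref{est2} from \eqref{est1} via the pointwise bound $\bigl||\laps{s}u|^{p-2}\Pi^\perp(u)\laps{s}u\bigr|\le|\Pi^\perp(u)\laps{s}u|^{p-1}$ and $p'(p-1)=p$ is correct and slightly cleaner than the paper's interpolation argument --- but it presupposes \eqref{est1}, which your argument does not establish.
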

\begin{proof} The localization arguments are by now standard, we only indicate how to prove the global estimates.

The estimate \eqref{est1} follows for $s \in (0,1]$ from
\begin{equation}\label{eq:Piperplapsu}
 |\Pi^\perp(u) \laps{s} u(x)|\aleq |H_{s}(u,u)|,
\end{equation}
(see e.g. \cite[Lemma E.1.]{Schikorra-2017},  and also Proposition 4.1 in \cite{DLLR-16}, for related properties) and by applying Lemma \ref{reghs}. For $s \geq 1$ we use that $\laps{s} = \laps{s-1}\Rz_{\alpha}\partial_\alpha$, and thus $\Pi^\perp(u) \partial_\alpha u = 0$ implies
\[
 \Pi^\perp(u) \laps{s} u = [\Pi^\perp(u),\laps{s-1}\Rz_{\alpha}](\partial_\alpha u)= \Pi^\perp(u)\laps{s-1}\Rz_{\alpha}(\partial_\alpha(u))-\laps{s-1}\Rz_{\alpha}( \Pi^\perp(u)\partial_\alpha(u))
\]
The estimate then follows from Lemma~\ref{reghs4}.

For the second estimate \eqref{est2}, assume that $p \in (1,2]$, and observe $\Pi \in L^\infty(\mathcal{N},\R^N)$ implies that pointwise
\[
 ||\laps{s} u|^{p-2}\Pi^\perp(u) \laps{s} u|\aleq |\laps{s} u|^{p-1}.
\]
Moreover, in view of \eqref{eq:Piperplapsu}, for $s \in (0,1)$,
\[
 ||\laps{s} u|^{p-2}\Pi^\perp(u) \laps{s} u|\aleq |\laps{s} u|^{p-2} |H_{s}(u,u)|.
\]
Pointwise interpolating these two estimates, for any $\beta \in [0,1]$
\[
 ||\laps{s} u|^{p-2}\Pi^\perp(u) \laps{s} u|\aleq |\laps{s} u|^{\beta(p-1)} |\laps{s} u|^{(1-\beta)(p-2)} |H_{s}(u,u)|^{1-\beta}.
\]
Since $p \in (1,2]$, set $\beta = 2-p \in [0,1]$. Thus,
\[
 ||\laps{s} u|^{p-2}\Pi^\perp(u) \laps{s} u|\aleq |H_{s}(u,u)|^{p-1}.
\]
Thus,
\[
  \||\laps{s} u|^{p-2}\Pi^\perp(u) \laps{s} u\|_{p'} \aleq \|H_{s}(u,u)\|_{p}^{p-1} \aleq \|\laps{s} u\|_{(p,\infty)}^{p-1}\ \|\laps{s} u\|_{(p,p)}^{p-1}.
\]
The case $s \geq 1$ follows once again from Lemma~\ref{reghs4}.
\end{proof}

We have all the ingredients for Proposition~\ref{pr:reformulation}.
\begin{proof}[Proof of Proposition~\ref{pr:reformulation}]
Recall the definition of projections $\Pi(u)$ and $\Pi^\perp(u)$ above. Observe that these are symmetric matrices.

Also observe that for $p < 2$, $\|\laps{s} u\|_{(p,2)} \aleq \|\laps{s} u\|_{p}$.

From \eqref{eq:eleq} and $\Pi(u) + \Pi^\perp(u) = Id$ we have for any $\varphi \in C_c^\infty(\R^n,\R^N)$
\[
\begin{split}
&\int_{\R^n}|\laps{s} u|^{p-2}\laps{s} u\cdot \laps{s} \varphi \\
=&\int_{\R^n}|\laps{s} u|^{p-2}\Pi(u)\laps{s} u\cdot \laps{s}(\Pi^\perp(u) \varphi)\\
&+\int_{\R^n}|\laps{s} u|^{p-2}\Pi^\perp(u)\laps{s} u\cdot \laps{s}(\Pi^\perp(u)\varphi).
\end{split}
\]
 
Then with $\Pi(u) \Pi^\perp (u) = 0$ we find
\[
\begin{split}
&\int_{\R^n}|\laps{s} u|^{p-2}\laps{s} u\cdot \laps{s} \varphi \\
=&\int_{\R^n}|\laps{s} u|^{p-2}\laps{s}\Pi^\perp(u)\ \Pi(u)\laps{s} u\cdot  \varphi\\
&+\int_{\R^n}|\laps{s} u|^{p-2}\Pi(u)\laps{s} u\cdot H_s(\Pi^\perp(u), \varphi)\\
&+\int_{\R^n}|\laps{s} u|^{p-2}\Pi^\perp(u)\laps{s} u\cdot \laps{s}(\Pi^\perp(u)\varphi).
\end{split}
\]
Now we define $\Omega \in L^{p}(\R^n,so(N))$,
\[
 \Omega := \laps{s}\Pi^\perp(u)\ \Pi(u) - \Pi(u)\laps{s}\Pi^\perp(u).
\]
Observe that for $p >2$ the extra assumption \eqref{eq:extraassumption} implies $\Omega \in L^{(p,2)}(\R^n,so(N))$.
Then we have
\[
\begin{split}
&\int_{\R^n}|\laps{s} u|^{p-2}\laps{s} u\ \laps{s} \varphi \\
=&\int_{\R^n}|\laps{s} u|^{p-2}\Omega\ \laps{s} u\, \varphi\\
&+\int_{\R^n}|\laps{s} u|^{p-2}\Pi(u) \laps{s}\Pi^\perp(u)\ \laps{s} u\ \, \varphi\\
&+\int_{\R^n}|\laps{s} u|^{p-2}\Pi(u)\laps{s} u\ H_s(\Pi^\perp(u), \varphi)\\
&+\int_{\R^n}|\laps{s} u|^{p-2}\Pi^\perp(u)\laps{s} u\ \laps{s}(\Pi^\perp(u)\varphi)\\
\end{split}
\]
And again by $\Pi(u) \Pi^\perp(u) = 0$,
\[
\begin{split}
&\int_{\R^n}|\laps{s} u|^{p-2}\laps{s} u\ \laps{s} \varphi \\
=&\int_{\R^n}|\laps{s} u|^{p-2}\Omega\ \laps{s} u\, \varphi + \int_{\R^n} E\ \varphi\\
\end{split}
\]
where
\[
 \begin{split}
\int_{\R^n} E\ \varphi  
:=&-\int_{\R^n}|\laps{s} u|^{p-2} H_s(\Pi(u),\Pi^\perp(u))\ \laps{s} u\ \, \varphi\\
&+\int_{\R^n}|\laps{s} u|^{p-2}\Pi(u)\laps{s} u\ H_s(\Pi^\perp(u), \varphi)\\
&-\int_{\R^n}|\laps{s} u|^{p-2}\laps{s}\Pi(u) \Pi^\perp(u)\ \laps{s} u\ \, \varphi\\
&+\int_{\R^n}|\laps{s} u|^{p-2}\Pi^\perp(u)\laps{s} u\ \laps{s}(\Pi^\perp(u)\varphi).
 \end{split}
\]
Now
\[
\begin{split}
 &\int_{\R^n}|\laps{s} u|^{p-2} H_s(\Pi(u),\Pi^\perp(u))\ \laps{s} u\ \, \varphi\\
 \aleq&\|\laps{s} u\|_{(p,\infty)}^{p-1}\ \|H_s(\Pi(u),\Pi^\perp(u))\|_{(p,1)}\ \|\varphi\|_{\infty} \\
\end{split}
 \]
And by the fractional Leibniz rule, see \cite{Lenzmann-Schikorra-2016},
\[
 \|H_s(\Pi(u),\Pi^\perp(u))\|_{(p,1)} \aleq \|\laps{s} u\|_{(p,2)}^2.
\]
Thus,
\[
\begin{split}
 &\int_{\R^n}|\laps{s} u|^{p-2} H_s(\Pi(u),\Pi^\perp(u))\ \laps{s} u\ \, \varphi\\
 \aleq&\|\laps{s} u\|_{(p,\infty)}^{p-1}\ \|\laps{s} u\|_{(p,2)}^2\ \|\varphi\|_{\infty}.
\end{split}
 \]
This is the crucial point where our assumption $\|\laps{s} u\|_{(p,2)} < \infty$ enters (which is only a nontrivial assumption for $p > 2$).
In the same spirit,
\[
\begin{split}
&\int_{\R^n}|\laps{s} u|^{p-2}\Pi(u)\laps{s} u\ H_s(\Pi^\perp(u), \varphi)\\
 \aleq&\|\laps{s} u\|_{(p,\infty)}^{p-1}\ \|\laps{s} u\|_{(p,2)}\ \|\laps{s} \varphi\|_{(p,2)}.
\end{split} 
\]
The remaining terms of $E$ can be estimated by Lemma~\ref{la:projectionlapu}.\par
For $p \leq 2$:
\[
\begin{split}
 &|\int_{\R^n}|\laps{s} u|^{p-2}\laps{s}\Pi(u) \Pi^\perp(u)\ \laps{s} u\ \, \varphi|\\
\aleq& \|\laps{s}u\|_{(p,\infty)}^{p-1} \|\laps{s}u\|_{p}^{p-1}\, \|\laps{s}\Pi(u)\|_{p}\, \|\varphi\|_{\infty}\\
\aleq& \|\laps{s}u\|_{(p,\infty)}^{p-1} \|\laps{s}u\|_{p}^{p}\, \|\varphi\|_{\infty}
 \end{split}
\]
and
\[
\begin{split}
 &\int_{\R^n}|\laps{s} u|^{p-2}\Pi^\perp(u)\laps{s} u\ \laps{s}(\Pi^\perp(u)\varphi)\\
 \aleq&\|\laps{s}u\|_{(p,\infty)}^{p-1} \|\laps{s}u\|_{p}^{p-1}\, \|\laps{s}(\Pi^\perp(u) \varphi)\|_{p}\\
 \aleq&\|\laps{s}u\|_{(p,\infty)}^{p-1} \|\laps{s}u\|_{p}^{p-1}\, \brac{\|\laps{s} u\|_{p} \|\varphi\|_\infty + \|\laps{s} \varphi\|_{p} + \|\laps{s} u\|_{p} \|\laps{s} \varphi\|_p}
 \end{split}
\]
where to estimate $ \|\laps{s}(\Pi^\perp(u) \varphi)\|_{p}$ we use the fact that
\[\begin{split}&\laps{s}(\Pi^\perp(u) \varphi)=H_s(\Pi^\perp(u),\varphi)+\laps{s}(\Pi^\perp(u))\varphi+\Pi^\perp(u)\laps{s}\varphi\end{split}\]
Lemma \ref{reghs} and Sobolev embeddings.\par
For $p > 2$:
\[
\begin{split}
 &|\int_{\R^n}|\laps{s} u|^{p-2}\laps{s}\Pi(u) \Pi^\perp(u)\ \laps{s} u\ \, \varphi|\\
\aleq& \|\laps{s}u\|_{(p,\infty)}^{p-2} \|\laps{s}\Pi(u)\|_{(p,\infty)}\, \|\Pi^\perp(u)\ \laps{s} u\|_{(p,1)}\, \, \|\varphi\|_{\infty}\\
\aleq& \|\laps{s}u\|_{(p,\infty)}^{p-1} \|\laps{s}u\|_{(p,2)}^{2}\, \|\varphi\|_{\infty}.
 \end{split}
\]
and
\[
\begin{split}
 &\int_{\R^n}|\laps{s} u|^{p-2}\Pi^\perp(u)\laps{s} u\ \laps{s}(\Pi^\perp(u)\varphi)\\
\aleq& \|\laps{s}u\|_{(p,\infty)}^{p-2} \|\laps{s}(\varphi \Pi^{\perp}(u))\|_{(p,2)}\, \|\Pi^\perp(u)\ \laps{s} u\|_{(p,2)}.\\
\aleq& \|\laps{s}u\|_{(p,\infty)}^{p-1} \|\laps{s}u\|_{(p,2)}\, \brac{\|\laps{s} u\|_{(p,2)} \|\varphi\|_\infty + \|\laps{s} \varphi\|_{(p,2)} + \|\laps{s} u\|_{(p,2)} \|\laps{s} \varphi\|_{(p,2)}}.
 \end{split}
\]
Therefore we get for $p\le 2$
\[
 \int_{\R^n} E\ \varphi  \aleq \|\laps{s} u\|_{(p,\infty)}^{p-1}\, (1+\|\laps{s} u\|_{p})\, \|\laps{s} u\|_{(p,2)}^{p-1}\ \brac{\|\varphi\|_{\infty}+\|\laps{s} \varphi\|_{p}}.
\]
 and for $p>2$
 \[
 \int_{\R^n} E\ \varphi  \aleq \|\laps{s} u\|_{(p,\infty)}^{p-1}\, (1+\|\laps{s} u\|_{p})\, \|\laps{s} u\|_{(p,2)}\ \brac{\|\varphi\|_{\infty}+\|\laps{s} \varphi\|_{(p,2)}}.
\]

Consequently  if we assume that $\varphi \in C_c^\infty(B(x_0,r))$, the above estimates  can be localized and we   find for $p \le  2$ 
\begin{eqnarray*}
 \int_{\R^n} E\ \varphi & \aleq& C(1+\|\laps{s} u\|_{p,\R^n}) \brac{\|\laps{s} u\|_{(p,\infty),B(x_0,2^{k_0} r)}^{p-1}+\sum_{k=k_0}^\infty 2^{-\sigma k}\|\laps{s} u\|_{(p,\infty),B(x_0,2^{k} r)}^{p-1}}\\
 ~~~&& \brac{\|\laps{s} u\|_{p,B(x_0,2^{k_0}r)}^{p-1} + \sum_{k=k_0}^\infty 2^{-\sigma k}\|\laps{s} u\|_{p,B(x_0,2^{k}r)}^{p-1}}\ \brac{\|\varphi\|_{\infty}+\|\laps{s} \varphi\|_{p}},
\end{eqnarray*}
and for $p > 2$ if we additionally assume \eqref{eq:extraassumption}, 
\begin{eqnarray*}
 \int_{\R^n} E\ \varphi & \aleq& C(1+\|\laps{s} u\|_{(p,2)})\ \brac{\|\laps{s} u\|_{(p,\infty),B(x_0,2^{k_0} r)}^{p-1}+\sum_{k=k_0}^\infty 2^{-\sigma k}\|\laps{s} u\|_{(p,\infty),B(x_0,2^{k} r}^{p-1}}\,\\
 ~~~&& \brac{\|\laps{s} u\|_{(p,2),B(x_0,2^{k_0}r)} + \sum_{k=k_0}^\infty 2^{-\sigma k}\|\laps{s} u\|_{(p,2),B(x_0,2^{k}r)}}\ \brac{\|\varphi\|_{\infty}+\|\laps{s} \varphi\|_{(p,2)}}.
\end{eqnarray*}
For all $k_0$ sufficiently large and $2^{k_0}r$ sufficiently small, we can assume by absolute continuity of the integral that 
\[
 \|\laps{s} u\|_{p,B(x_0,2^{k_0}r)}^{p-1} + \sum_{k=k_0}^\infty 2^{-\sigma k}\|\laps{s} u\|_{p,B(x_0,2^{k}r)}^{p-1}  < \eps.
\]
and under the assumption \eqref{eq:extraassumption} also
\[
 \|\laps{s} u\|_{(p,2),B(x_0,2^{k_0}r)}+ \sum_{k=k_0}^\infty 2^{-\sigma k}\|\laps{s} u\|_{(p,2),B(x_0,2^{k}r)} < \eps
\]
This proves the localized estimate for $E$ and we conclude the proof of Proposition~\ref{pr:reformulation}.
\end{proof}

\section{Construction of a good gauge}\label{s:goodgauge}
The next theorem is an adaption of \cite[Theorem~1.2]{DLR2} of Rivi\`{e}re and the first-named author, which is a choice of a good gauge. It follows the strategy developed by Rivi\`{e}re in \cite{Riviere-2007} 
which was itself inspired by Uhlenbeck’s construction of Coulomb gauges \cite{Uhlenbeck-1982}. For extensions and relations to the moving frame method by H\'elein see also \cite{Schikorra-2010,DLnD,Schikorra-eps,Goldsteins-2017}.

\begin{theorem}[Choice of gauge]\label{th:gauge}
There exists $\eps > 0$ so that the following holds.

Whenever $\Omega \in L^{(\frac{n}{s},2)}(\R^n,so(N))$ satisfies 
\[
 \|\Omega\|_{(\frac{n}{s},2)} < \eps,
\]
then there exists $P \in \dot{H}^{s,(\frac{n}{s},2)}(\R^n,SO(N))$ so that
\[
 \|\laps{s} P + P \Omega\|_{L^{(\frac{n}{s},1)}(\R^n,\R^{N\times N})} \aleq \|\Omega\|_{(\frac{n}{s},2)}.
\]
Moreover,
\[
 \|\laps{s} P\|_{L^{(\frac{n}{s},2)}(\R^n,\R^{N\times N})} \aleq \|\Omega\|_{(\frac{n}{s},2)}.
\]
\end{theorem}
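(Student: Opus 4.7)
The plan is to adapt the construction of Coulomb gauges from \cite{DLR2} to this Lorentz setting, following the broad scheme of Uhlenbeck and Rivi\`ere. The two asserted estimates are coupled, and I intend to prove them simultaneously via a continuation argument along the family $\Omega_t := t\Omega$, $t \in [0,1]$. At $t=0$ the gauge $P\equiv I_N$ works, and I want to show that the set of $t$ for which some $P_t$ exists satisfying both estimates is open and closed in $[0,1]$. Openness follows from an implicit-function argument: parametrize $P = \exp(A)P_{t_0}$ with $A \in \dot H^{s,(n/s,2)}(\R^n, so(N))$ small, and linearize the defining equation at $A=0$. To leading order the linearization is the fractional Laplacian $\laps{s}$, which is an isomorphism $\dot H^{s,(n/s,2)} \to L^{(n/s,2)}$; smallness of $\|\Omega\|_{(n/s,2)}$ absorbs the quadratic errors arising both from nonlinearity of $\exp$ and from the constraint that $P$ takes values in $SO(N)$. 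This simultaneously produces the a priori estimate $\|\laps{s}P\|_{(n/s,2)}\aleq\|\Omega\|_{(n/s,2)}$, and closedness comes from these same a priori bounds.

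\textbf{The Coulomb-gauge condition.} To get the sharper $L^{(n/s,1)}$-estimate I do not prescribe $\laps{s}P + P\Omega$ freely but rather select, among admissible gauges, the one minimizing a natural energy (e.g.\ an $L^2$-renormalization of $\int|\laps{s}P + P\Omega|^2$, or its Lorentz-interpolated variant as in \cite{DLR2}). The Euler--Lagrange equation then states that the antisymmetric part of $A := (\laps{s}P)P^{-1} + P\Omega P^{-1}$ is orthogonal, in a nonlocal divergence sense, to the image of $so(N)$-valued variations. Using $P^TP = I$, $\Omega^T=-\Omega$ and decomposing $\laps{s}(P\,\cdot\,)$ via the three-term commutator $H_s$ of \eqref{hs}, this condition rewrites---modulo commutator errors handled by Lemma~\ref{reghs}, Lemma~\ref{reghs2} and Lemma~\ref{reghs4}---as a structural identity expressing $\laps{s}P + P\Omega$ against test functions as a pairing with $BMO$.

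\textbf{From BMO-duality to the Lorentz improvement.} I then test $\laps{s}P + P\Omega$ against an arbitrary $\varphi$ in the predual of $L^{(n/s,1)}$. Invoking the embedding $\dot H^{s,n/s}\hookrightarrow BMO$ from Section~\ref{defnot}, the Hardy-BMO duality built into Lemma~\ref{reghs2}, and the Coulomb gauge condition of Step~2, I expect an inequality of the form
\[
\bigl|\langle \laps{s}P + P\Omega,\, \varphi\rangle\bigr| \aleq \bigl(\|\laps{s}P\|_{(n/s,2)} + \|\Omega\|_{(n/s,2)}\bigr)\,\|\Omega\|_{(n/s,2)}\,\|\varphi\|_{((n/s)',\infty)},
\]
and then the a priori bound from Step~1 closes the argument. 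The main obstacle is precisely the passage from the Euler--Lagrange equation to a genuine Hardy-space estimate: the natural ``integration by parts'' against $\laps{s}$ produces fractional Leibniz errors $H_s(P,\Omega)$ and $[\laps{s},P]\Omega$ which must be shown to land in $\mathcal{H}^1(\R^n)$, not merely $L^1$, in order to survive pairing with $BMO$. Using Lemma~\ref{reghs} and Lemma~\ref{reghs4} at the endpoint Lorentz exponents paired as $(n/s,2)$ and $((n/s)',2)$---exactly the configuration in which $\|\laps{s}P\|_{(n/s,2)}$ and $\|\Omega\|_{(n/s,2)}$ multiply into an $L^{(n/s,1)}$-output---is what makes the nonlocal Leibniz errors close up, and this is the analytic heart of the argument.
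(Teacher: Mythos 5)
Your continuity-method skeleton for building the gauge is the right framework and matches the paper in broad strokes: the paper's proof of Theorem~\ref{th:gauge2} runs a continuity argument over a starshaped set $\mathcal{U}_\eps$, linearizes $P = P_0\exp(U)$, shows the linearization is invertible up to smallness, and deduces openness via the implicit function theorem. One technical caveat you should be aware of: the paper does not carry out the continuity method directly at the endpoint Lorentz space $L^{(n/s,2)}$, but instead works in $L^{q_1}\cap L^{q_2}$ with $1 < q_1 < n/s < q_2$, imposing the Lorentz smallness only as a constraint, and then passes to the full statement by an approximation step borrowed from \cite{DLR2}. Working directly at the Lorentz endpoint, as you propose, is likely to run into compactness/closedness issues.

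The genuine gap is in your treatment of the $L^{(n/s,1)}$ estimate, which is the whole point of the theorem. The paper does \emph{not} obtain this bound by testing against a predual and invoking Hardy--BMO duality. It instead chooses the gauge by the explicit pointwise algebraic condition \eqref{eq:Pgauge}, namely $P^T\laps{s}P - \laps{s}P\,P^T + 2\Omega = 0$, and then observes that $\laps{s}(PP^T)\equiv 0$ turns this into the exact identity
\[
\laps{s}P + P\Omega \;=\; \tfrac12\, H_s(P,P^T)\,P,
\]
so the Lorentz gain is an immediate consequence of the bilinear three-commutator estimate, not of any duality pairing. You never write down this identity, and without it the duality scheme you sketch has no mechanism to close: an Euler--Lagrange condition from minimizing $\int|\laps{s}P+P\Omega|^2$ does not by itself hand you an $\mathcal{H}^1$ representation of the quantity you want to estimate.

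Your Lorentz exponent arithmetic in the last paragraph is also wrong, and this is not a minor detail. You pair $(n/s,2)$ with $((n/s)',2)$, which by H\"older lands in $L^{(1,1)}=L^1$, not $L^{(n/s,1)}$; and pairing $(n/s,2)$ with $(n/s,2)$ (what is actually available) lands in $L^{(n/(2s),1)}$, again not $L^{(n/s,1)}$. The gain to $L^{(n/s,1)}$ comes precisely from the structure of the commutator $H_s$, which \emph{splits} the $s$ derivative: Lemma~\ref{reghs} gives $\|H_s(P,P^T)\|_{(n/s,1)} \aleq \|\laps{s-t}P\|_{(n/(s-t),2)}\|\laps{t}P^T\|_{(n/t,2)}$ for $t\in(0,s)$, and by Sobolev embedding both factors are controlled by $\|\laps{s}P\|_{(n/s,2)}$. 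The H\"older exponents $\tfrac{s-t}{n}+\tfrac{t}{n}=\tfrac{s}{n}$ and $\tfrac12+\tfrac12=1$ are what produce $L^{(n/s,1)}$. Without routing the estimate through $H_s$ in this way, the bound is simply out of reach, and the role you attribute to $BMO$-duality and Lemma~\ref{reghs2} does not substitute for it.
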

Theorem~\ref{th:gauge} is a consequence of the following
\begin{theorem}\label{th:gauge2}
For any $q \in [1,\infty]$ there exist $\eps > 0$ so that if 
\[
\|\Omega\|_{(\frac{n}{s},q)} < \eps,
\]
then there exists $P \in \dot{H}^{s,(p,q)}(\R^n,SO(N))$ so that 
\begin{equation}\label{eq:Pgauge}
 P^T \laps{s} P - \laps{s} P\, P^T + 2\Omega = 0 \quad \mbox{in $\R^n$}.
\end{equation}
\end{theorem}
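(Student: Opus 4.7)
I'll solve \eqref{eq:Pgauge} by a Banach fixed-point argument in $\dot H^{s,(p,q)}(\R^n, so(N))$. The parametrization $P = \exp(A)$ with $A:\R^n \to so(N)$ antisymmetric reduces the nonlinear constraint $P \in SO(N)$ to the linear constraint $A^T = -A$: since $(e^A)^T = e^{-A} = (e^A)^{-1}$, orthogonality is automatic, and the unknown lives in a linear Banach space on which \eqref{eq:Pgauge} becomes a semilinear equation for $A$.

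After substituting $P = e^A$ into \eqref{eq:Pgauge} and expanding $e^{\pm A}$ by its Taylor series, I would iteratively apply the three-term commutator identity (Lemma~\ref{reghs}) and the Riesz-commutator estimate (Lemma~\ref{reghs4}) to reduce $\laps{s}$ acting on each product $A^k$ to expressions involving $\laps{s} A$ and three-term commutator residuals $H_s(A^i,A^j)$. This rewrites \eqref{eq:Pgauge} as
\[
 \mathcal{L}(A) + 2\Omega \;=\; \mathcal{N}(A),
\]
where $\mathcal{L}$ is a bounded invertible operator on $\dot H^{s,(p,q)}$ extracted from the leading non-degenerate Taylor contribution, and $\mathcal{N}(A) = \sum_{k\ge 2}\mathcal{N}_k(A)$ collects multilinear expressions of degree $\ge 2$ in $A$ built from monomials $A^{j_1}\laps{s} A\, A^{j_2}$ and commutator residuals $H_s(A^i,A^j)$. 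Using H\"older's inequality in Lorentz spaces together with Lemmas~\ref{reghs}--\ref{reghs4}, one obtains the quadratic bound
\[
 \|\mathcal{N}(A)\|_{(p,q)} \aleq \|\laps{s} A\|_{(p,q)}^2\, \Phi(\|\laps{s} A\|_{(p,q)})
\]
and the matching Lipschitz estimate, with $\Phi$ a smooth auxiliary function bounded near the origin arising from the matrix-exponential series. Consequently, for $\|\Omega\|_{(p,q)} < \eps$ sufficiently small, the map $T : A \mapsto \mathcal{L}^{-1}(-2\Omega + \mathcal{N}(A))$ is both a self-map and a strict contraction on the closed ball $B_\rho = \{A : \|\laps{s} A\|_{(p,q)} \le \rho\}$ with $\rho \sim \|\Omega\|_{(p,q)}$; Banach's fixed-point theorem then yields the desired $A$, and $P := e^A$ solves \eqref{eq:Pgauge}.

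\textbf{Main obstacle.} The principal technical hurdle is the Lorentz-space control of the multilinear series $\mathcal{N}(A)$: each monomial arising from the expansion of $e^{-A}\,\laps{s} e^A - (\laps{s} e^A)\,e^{-A}$ requires a separate application of Lemma~\ref{reghs} or Lemma~\ref{reghs4}, and the resulting universal constants must combine into a series summable uniformly over $A \in B_\rho$. A secondary conceptual difficulty is the correct identification of the invertible linear operator $\mathcal{L}$: a naive first-order Taylor expansion of \eqref{eq:Pgauge} shows that the linear-in-$A$ parts of $P^T\laps{s} P$ and $\laps{s} P\, P^T$ agree and so cancel in the difference, hence $\mathcal{L}$ must be read off from the second-order (bracket) terms together with the antisymmetry $A^T = -A$. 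Concretely, one imposes an ansatz of the form $A = -\lapms{s}\Omega + \tilde A$, which absorbs the singular principal part of $\Omega$ and reduces \eqref{eq:Pgauge} to a genuinely perturbative equation for the correction $\tilde A$; the fixed-point scheme is then carried out on $\tilde A$ rather than $A$ itself.
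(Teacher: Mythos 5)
Your plan---a global Banach fixed-point argument in $\dot H^{s,(n/s,q)}(\R^n,so(N))$ with the parametrization $P=\exp(A)$---differs from the paper's, which follows the continuity method of \cite{DLR2}: one restricts first to $\Omega \in L^{q_1}\cap L^{q_2}(\R^n)$ with $1<q_1<\frac{n}{s}<q_2$, shows that the set of potentials admitting a gauge with uniform estimates is nonempty, relatively closed and relatively open in the small ball $\mathcal{U}_\eps$, and then removes the auxiliary integrability by approximation. Your plan has two concrete gaps. First, the cancellation of the linear-in-$A$ terms that you observe is a symptom of a typo in \eqref{eq:Pgauge}: the intended equation reads $P^T\laps{s}P - \laps{s}P^T\,P + 2\Omega = 0$. (Compare the explicit formula for $L(\eta)$ in the paper's proof, which at $P_0=I$ reduces to $2\laps{s}\eta$ rather than $0$, and the derivation of Theorem~\ref{th:gauge}, which only goes through with this ordering; moreover $P^T\laps{s}P - \laps{s}P^T\,P$ is manifestly antisymmetric, as $\Omega\in so(N)$ requires.) With the intended equation the linearization at $A=0$ is $2\laps{s}A$ and your ``secondary conceptual difficulty'' disappears. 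In any case your proposed repair $A=-\lapms{s}\Omega+\tilde A$ would not save a genuinely degenerate equation: substituting it into a relation whose leading term is the commutator $[\laps{s}A,A]$ still produces no linear term in $\tilde A$, so the fixed-point map for $\tilde A$ would have the same degeneracy.

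The deeper gap concerns the function spaces. At the critical exponent $sp=n$ the embedding $\dot H^{s,(n/s,q)}\hookrightarrow BMO$ is sharp, and smallness of $\|\laps{s}A\|_{(n/s,q)}$ does not make $A$ small, or even finite, in $L^\infty$. The nonlinear remainder (your $\mathcal{N}(A)$) contains honest products $A\,\laps{s}A$ with no commutator structure; these cannot be bounded by $\|\laps{s}A\|_{(n/s,q)}^2$ via Lemma~\ref{reghs} or Lemma~\ref{reghs4}, and the matrix-exponential series $\sum_k A^k/k!$ need not converge in any of your working spaces when $A$ is only in $BMO$. This is exactly the obstruction the paper's continuity method bypasses: (i) working first with $\Omega\in L^{q_1}\cap L^{q_2}$ yields a gauge with $P-I\in L^\infty$ through the supercritical embedding $\dot H^{s,q_2}\hookrightarrow C^0$, and only afterwards is the general $\Omega$ handled by density; (ii) along the continuity path the base gauge $P_0$ lies in the compact group $SO(N)$, hence is automatically $L^\infty$-bounded, while each incremental perturbation $U$ in $P=P_0 e^U$ can be taken genuinely small in $L^\infty$; the invertibility of the linearization at $P_0$ (Lemma~\ref{la:invertible}) then exploits the smallness of $\laps{s}P_0$ carried along the path. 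Neither device is available to a one-shot fixed-point in the single critical Lorentz--Sobolev space.
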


\begin{proof}[Proof of Theorem~\ref{th:gauge}]
From \eqref{eq:Pgauge}
\[
 \laps{s} P   + P\Omega  = \frac{1}{2}\brac{\laps{s} P P^T + P \laps{s} P^T}\, P,
\]
that is, since $\laps{s} (PP^T) \equiv \laps{s} I \equiv 0$,
\[
 \laps{s} P   + P\Omega  = \frac{1}{2}\, H_s (P,P^T)\, P,
\]
Since $\laps{s} P \in L^{(\frac{n}{s},2)}(\R^n)$ from the three commutator estimates (see Lemma \ref{reghs}) we find that 
\[
 H_s (P,P^T) \in L^{(\frac{n}{s},1)}(\R^n,\R^N), 
\]
and have consequently shown that 
\[
 \laps{s} P   + P\Omega  \in L^{(\frac{n}{s},1)}(\R^n,\R^N).
\]
\end{proof}

\subsection{Construction of the optimal gauge: Proof of Theorem~\ref{th:gauge2}}
In order to establish \eqref{eq:Pgauge} we adapt the strategy from \cite[Theorem~1.2]{DLR2}. For notational simplicity we prove this theorem only for $L^{(\frac{n}{s},2)}$ (i.e. $q=2$) the case we need.

For the rest of this section fix $1 < q_1, q_2 < \infty$ exponents so that $1 < q_1 < \frac{n}{s} < q_2 < \infty$. 

As in \cite[Proof of Theorem~1.2, Step 4]{DLR2}, by an approximation argument it suffices to prove the claim under the stronger assumption that $\Omega \in L^{q_1}\cap L^{q_2}(\R^n)$ with good estimates. More precisely, for $\eps > 0$
let
\[
     \mathcal{U}_\eps  := \left \{\Omega \in L^{q_1} \cap L^{q_2} (\R^n,so(N)):\|\Omega\|_{(\frac{n}{s},2)} \leq \eps \right \},
    \]
and for constants $\eps, \Theta > 0$ let $\mathcal{V}_{\eps,\Theta} \subset \mathcal{U}_\eps$ be the set where we have the decomposition \eqref{eq:Pgauge} with the estimates
\begin{equation}\label{eq:gauge:p2est}
  \|\laps{s} P\|_{(p,2)} \leq \Theta\, \|\Omega\|_{(p,2)}
\end{equation}
\begin{equation}\label{eq:gauge:q12}
  \|\lapv P\|_{q_1} \leq \Theta \|\Omega\|_{q_1}, \quad \|\lapv P\|_{q_2} \leq \Theta \|\Omega\|_{q_2}.
\end{equation}
That is,
\[
 \mathcal{V}_{\eps,\Theta} := \left \{\Omega \in \mathcal{U}_\eps:\ \begin{array}{c}
                                             \mbox{there exists $P \in \dot{H}^{s,q_1} \cap \dot{H}^{s,q_2}(\R^n,SO(N))$, so that}\\
                                             P-I \in L^{\frac{nq_1}{n-q_1 s}}(\R^n,\R^{N \times N}) \mbox{ and \eqref{eq:gauge:p2est}, \eqref{eq:gauge:q12},}\\
                                             \mbox{and \eqref{eq:Pgauge} holds}.
                                            \end{array}
\right \}
\]
Let us remark a technical detail. The condition $P-I \in L^{\frac{nq_1}{n-q_1 s}}(\R^n,\R^{N \times N})$ corresponds to prescribing Dirichlet data at infinity.   With the definition of homogeneous Sobolev spaces as above, the proof below works also without this Dirichlet assumption which essentially corresponds to a Neumann-type condition at infinity. For our purpose, there is no advantage to either choice.  
We then need to prove the following
\begin{proposition}\label{pr:VepseqUeps}
There exist $\Theta > 0$ and $\eps > 0$ so that $\mathcal{V}_{\eps,\Theta} = \mathcal{U}_\eps$.
\end{proposition}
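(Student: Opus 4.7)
The plan is to use the continuity method, modeled on Uhlenbeck's Coulomb gauge construction as adapted to the fractional setting in \cite{DLR2}. Viewing $\mathcal{U}_\eps$ as a subset of the Banach space $X = L^{q_1}(\R^n, so(N)) \cap L^{q_2}(\R^n, so(N))$, it is convex (a norm ball intersected with a subspace) and hence path-connected. Since $0 \in \mathcal{V}_{\eps,\Theta}$ trivially (take $P \equiv I$), it suffices to show $\mathcal{V}_{\eps,\Theta}$ is both relatively open and relatively closed in $\mathcal{U}_\eps$.

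First I would record a self-improvement of the estimate \eqref{eq:gauge:p2est} that serves both for choosing $\Theta$ and for closedness. As in the proof of Theorem~\ref{th:gauge}, any $P$ satisfying \eqref{eq:Pgauge} automatically satisfies $\laps{s} P = -P\,\Omega + \tfrac{1}{2} H_s(P, P^T) P$. The commutator estimate Lemma~\ref{reghs} together with $\|P\|_\infty \leq 1$ yields
\[
\|\laps{s} P\|_{(\frac{n}{s},2)} \le C_0\, \|\Omega\|_{(\frac{n}{s},2)} + C_0\, \|\laps{s}P\|_{(\frac{n}{s},2)}^2.
\]
Choosing $\Theta = 2C_0$ and then $\eps$ so small that $2C_0 \Theta\, \eps < 1/2$, every $\Omega \in \mathcal{V}_{\eps,\Theta}$ in fact satisfies $\|\laps{s}P\|_{(\frac{n}{s},2)} \leq 2C_0 \|\Omega\|_{(\frac{n}{s},2)} = \Theta\,\|\Omega\|_{(\frac{n}{s},2)}$, with strict inequality away from $\Omega=0$. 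A parallel argument using Lemma~\ref{reghs4} provides the analogous strict improvement of \eqref{eq:gauge:q12}.

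For openness, fix $\Omega_0 \in \mathcal{V}_{\eps,\Theta}$ with gauge $P_0$ and consider a nearby $\Omega = \Omega_0 + \omega$ with $\omega$ small in $X \cap L^{(\frac{n}{s},2)}$. I would seek the new gauge as $P = P_0 e^{U}$ with $U$ valued in $so(N)$ and small in $\dot H^{s,(\frac{n}{s},2)} \cap \dot H^{s,q_1} \cap \dot H^{s,q_2}$. Substituting into \eqref{eq:Pgauge} and expanding the exponential, the principal symbol at $U=0$ is a constant multiple of $\laps{s}$ acting on $so(N)$-valued maps, which is invertible between the relevant homogeneous fractional Sobolev spaces. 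The remaining terms are at least linear in $U$ with small prefactor (controlled by $\|\Omega_0\|+\|\laps{s}P_0\|$, which are $O(\eps)$) or quadratic in $U$; thus the fixed point problem for $U$ is a contraction on a small ball in the appropriate product space, yielding a solution. The estimates \eqref{eq:gauge:p2est}--\eqref{eq:gauge:q12} for the new $P$ then follow from the equation together with the self-improvement step, with room to spare in $\Theta$.

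For closedness, take $\Omega_k \to \Omega$ in $X \cap L^{(\frac{n}{s},2)}$ with $\Omega_k \in \mathcal{V}_{\eps,\Theta}$ having gauges $P_k$. By \eqref{eq:gauge:p2est}--\eqref{eq:gauge:q12} the $\laps{s} P_k$ are uniformly bounded in $L^{(\frac{n}{s},2)} \cap L^{q_1} \cap L^{q_2}$, and the Dirichlet-type normalization $P_k - I \in L^{nq_1/(n-q_1 s)}$ pins down $P_k$ at infinity. Passing to a subsequence, $P_k \rightharpoonup P$ in the relevant homogeneous Sobolev spaces, and Rellich--Kondrachov gives strong $L^p_{\loc}$ and pointwise a.e.\ convergence, so $P \in SO(N)$ a.e.\ and the bounds pass by weak lower semicontinuity. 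In the equation, $P_k \Omega_k \to P\Omega$ distributionally by Hölder (strong convergence of $\Omega_k$ absorbs the weak convergence of $P_k$), while the quadratic $H_s(P_k,P_k^T)\,P_k$ term passes to the limit via the estimate $\|H_s(P_k,P_k^T)\|_{(\frac{n}{s},1)} \lesssim \|\laps{s}P_k\|_{(\frac{n}{s},2)}^2$ of Lemma~\ref{reghs} combined with the a.e.\ convergence of $P_k$.

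I expect the main obstacle to lie in the closedness step: one must simultaneously preserve the pointwise orthogonality constraint $P \in SO(N)$, handle the Dirichlet data at infinity responsible for compactness, and pass to the limit in the nonlinear quadratic right-hand side of \eqref{eq:Pgauge} in a space where the commutator estimates are compatible with weak convergence. The self-improvement of estimates derived at the start is what prevents the limit from sitting on the boundary of $\mathcal{V}_{\eps,\Theta}$ and closes the continuity argument.
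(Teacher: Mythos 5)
Your proposal follows essentially the same continuity-method architecture as the paper: connectedness and nonemptiness as you state, closedness via weak lower semicontinuity and a.e.\ convergence, and openness by writing $P=P_0e^{U}$, observing that the linearization has principal part $2\laps{s}$, solving by a contraction/implicit-function argument, and then recovering the uniform $\Theta$-estimate through an absorption ``self-improvement'' step, which is precisely the paper's Lemma~\ref{la:Pgaugeuniformest}. The one place you are sketchier than the paper is in establishing invertibility of the linearized operator $L$ on the full intersection $L^{\frac{nq_1}{n-q_1 s}}\cap \dot H^{s,q_1}\cap\dot H^{s,q_2}$, where the paper bootstraps through intermediate Lorentz spaces (Lemma~\ref{la:invertible}) rather than arguing directly, but this is a matter of detail, not of approach.
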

Proposition~\ref{pr:VepseqUeps} follows from a continuity method, once we show the following four properties
\begin{itemize}
 \item[(i)] $\mathcal{U}_\eps$ is  connected.
 \item[(ii)] $\mathcal{V}_{\eps,\Theta}$ is nonempty.
 \item[(iii)] For any $\eps, \Theta > 0$, $\mathcal{V}_{\eps,\Theta}$ is a relatively closed subset of $\mathcal{U}_\eps$.
 \item[(iv)] There exist $\Theta > 0$ and $\eps > 0$ so that $\mathcal{V}_{\eps,\Theta}$ is a relatively open subset of $\mathcal{U}_\eps$.
\end{itemize}
Property (i) is clear, since $\mathcal{U}_\eps$ is starshaped with center $0$: for any $\Omega \in \mathcal{U}_\eps$ we have $t\Omega \in \mathcal{U}_\eps$ for all $t \in [0,1]$. Property (ii) is also obvious since $P :\equiv I$ is an element of $\mathcal{V}_{\eps,\Theta}$. The closedness property (iii) follows almost verbatim from \cite[Proof of Theorem~1.2, Step~1, p.1315]{DLR2}: there one replaces $\lapv$ by $\laps{s}$, $q$ by $q_2$, $q'$ by $q_1$, and the $L^2$-norm by the $L^{(\frac{n}{s},2)}$-norm (for which we still can use the lower semicontinuity). Observe that a uniform bound of the $L^{q_1}$-norm as in \eqref{eq:gauge:q12} implies by Sobolev embedding in particular a uniform bound $P-I$ in $L^{\frac{nq_1}{n-q_1 s}}(\R^n,\R^{N \times N})$.

The main point is to show the openness property $(iv)$. For this let $\Omega_0$ be arbitrary in $\mathcal{V}_{\eps,\Theta}$, for some $\eps, \Theta > 0$ chosen below. Let $P_0 \in \dot{H}^{s,q_1} \cap \dot{H}^{s,q_2}(\R^n,SO(N))$, $P_0-I \in L^{\frac{nq_1}{n-q_1 s}}(\R^n,\R^{N \times N})$ so that the decomposition \eqref{eq:Pgauge} as well as the estimates \eqref{eq:gauge:p2est}, \eqref{eq:gauge:q12} are satisfied for $\Omega_0$.

We introduce the map
\[
 F(U) := \brac{P_0\, \exp(U)}^{-T}\ \laps{s} (P_0\, \exp(U)) - \laps{s}\brac{P_0\, \exp(U)}^{-T}\  (P_0\, \exp(U))
\]
Observe that for $U \in L^{\frac{nq_1}{n-q_1 s}}\cap \dot{H}^{s,q_1} \cap \dot{H}^{s,q_2}(\R^n,so(N))$,
\[
 P_0\, \exp(U) -I = (P_0 - I)\exp(U) + I-\exp(U) \in L^{\frac{nq_1}{n-q_1 s}}(\R^n,\R^{N \times N}).
\]
Indeed, observe that $U \in L^\infty$ and thus $(P_0 - I)\exp(U) \in L^{\frac{nq_1}{n-q_1 s}}(\R^n,\R^{N \times N})$. Moreover, 
\[
 |I-\exp(U)| \aleq (1+\|U\|_{\infty}) |U|,
\]
and thus $I-\exp(U) \in L^{\frac{nq_1}{n-q_1 s}}(\R^n,\R^{N\times N})$.

As in \cite[Proof of Theorem~1.2, Step~2, p.1316]{DLR2} we can conclude that $F$ is $C^1$ as a map from
\[
F: L^{\frac{nq_1}{n-q_1 s}}\cap \dot{H}^{s,q_1} \cap \dot{H}^{s,q_2}(\R^n,so(N)) \to L^{q_1} \cap L^{q_2}(\R^n,so(N)).
\]
and that we can compute $DF(0)$ as 
\[
 \frac{d}{dt}\Big|_{t=0}F(t\eta) = L(\eta),
\]
where for $\eta \in L^{\frac{nq_1}{n-q_1 s}}\cap \dot{H}^{s,q_1} \cap \dot{H}^{s,q_2}(\R^n,so(N))$,
\[
 L(\eta) := -\eta P_0^T \laps{s} P_0 + \laps{s}(\eta\, P_0^T)P_0+ P_0^T\, \laps{s}(P_0\eta ) -\laps{s}P_0^T\ P_0\eta
\]
In order to use a fixed-point argument for $F$, we need to show that $L$ is an isomorphism.
\begin{lemma}\label{la:invertible}
For any $\Theta > 0$ there exists a $\eps > 0$ so that the following holds for any $\Omega_0$ and $P_0$ as above.

For any $\omega \in L^{q_1}\cap L^{q_2}(\R^n,so(N))$ there exists a unique $\eta \in L^{\frac{nq_1}{n-q_1 s}}\cap \dot{H}^{s,q_1} \cap \dot{H}^{s,q_2}(\R^n,so(N))$ so that 
\[
\omega = L(\eta) 
\]
and for some constant $C = C(\Omega_0,\Theta) > 0$ it holds
\[
\|\eta\|_{L^{\frac{nq_1}{n-q_1 s}}} + \|\laps{s}\eta\|_{L^{q_1}(\R^n)} +\|\laps{s}\eta\|_{L^{q_2}(\R^n)} \leq C \brac{\|\omega\|_{L^{q_1}(\R^n)} +\|\omega\|_{L^{q_2}(\R^n)}}
\]
\end{lemma}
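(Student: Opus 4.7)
The plan is to view $L$ as a small perturbation of the operator $\eta\mapsto 2\laps{s}\eta$ and invert by a Neumann series. Recall that $2\laps{s}\colon \dot H^{s,q}(\R^n)\to L^q(\R^n)$ is an isomorphism for every $q\in(1,\infty)$, so it suffices to quantify the perturbation in terms of $\|\laps{s}P_0\|_{L^{(n/s,2)}}$, which is controlled by $\Theta\eps$ thanks to \eqref{eq:gauge:p2est}.

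\textbf{Decomposition.} Apply the three-term commutator identity
$\laps{s}(fg)=H_s(f,g)+\laps{s}f\, g+f\, \laps{s}g$
to $\laps{s}(\eta P_0^T)$ and to $\laps{s}(P_0\eta)$ in the definition of $L$. Using $P_0^T P_0=P_0 P_0^T=I$, the terms where no derivative lands on $P_0$ telescope to $2\laps{s}\eta$, and what remains regroups as
\[
L(\eta)=2\laps{s}\eta + R(\eta),\qquad R(\eta):=H_s(\eta,P_0^T)\,P_0+P_0^T\,H_s(P_0,\eta)+\eta\,M-M\,\eta,
\]
where $M:=\laps{s}P_0^T\, P_0-P_0^T\,\laps{s}P_0$. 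Every summand in $R(\eta)$ is linear in $\eta$ (or $\laps{s}\eta$) and linear in $\laps{s}P_0$ or its transpose.

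\textbf{Smallness of the perturbation.} Since $P_0\in SO(N)$ we have $\|P_0\|_\infty\le 1$. The Lorentz-Sobolev embedding gives $\dot H^{s,q_i}(\R^n)\hookrightarrow L^{(r_i,q_i)}(\R^n)$ with $r_i=\tfrac{nq_i}{n-sq_i}$; hence by Lorentz Hölder,
\[
\|\eta\,M\|_{L^{q_i}}+\|M\,\eta\|_{L^{q_i}}\aleq \|\eta\|_{L^{(r_i,q_i)}}\,\|M\|_{L^{(n/s,2)}}\aleq \|\laps{s}\eta\|_{L^{q_i}}\,\|\laps{s}P_0\|_{L^{(n/s,2)}}.
\]
For the commutator pieces, apply Lemma~\ref{reghs} with any fixed $t\in(0,s)$ and Riesz potential mapping to obtain, for $i=1,2$,
\[
\|H_s(\eta,P_0^T)\|_{L^{q_i}}+\|H_s(P_0,\eta)\|_{L^{q_i}}\aleq \|\laps{s}\eta\|_{L^{q_i}}\,\|\laps{s}P_0\|_{L^{(n/s,2)}}.
\]
Combining and invoking \eqref{eq:gauge:p2est}, we find
\[
\|R(\eta)\|_{L^{q_1}\cap L^{q_2}}\aleq \Theta\,\|\Omega_0\|_{L^{(n/s,2)}}\,\|\laps{s}\eta\|_{L^{q_1}\cap L^{q_2}}\le C\,\Theta\eps\,\|\laps{s}\eta\|_{L^{q_1}\cap L^{q_2}}.
\]

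\textbf{Inversion, Sobolev control and uniqueness.} Writing $L=2\laps{s}\bigl(I+(2\laps{s})^{-1}R\bigr)$ and choosing $\eps=\eps(\Theta)$ so small that the operator norm of $K:=(2\laps{s})^{-1}R$ on $\dot H^{s,q_1}\cap \dot H^{s,q_2}$ is bounded by $1/2$, the Neumann series inverts $I+K$. For any $\omega\in L^{q_1}\cap L^{q_2}(\R^n,so(N))$ we therefore obtain a unique $\eta\in \dot H^{s,q_1}\cap \dot H^{s,q_2}(\R^n,so(N))$ with $L(\eta)=\omega$ and
\[
\|\laps{s}\eta\|_{L^{q_1}}+\|\laps{s}\eta\|_{L^{q_2}}\le C(\Omega_0,\Theta)\,\bigl(\|\omega\|_{L^{q_1}}+\|\omega\|_{L^{q_2}}\bigr).
\]
The Sobolev embedding $\dot H^{s,q_1}(\R^n)\hookrightarrow L^{nq_1/(n-q_1 s)}(\R^n)$ then yields the $L^{nq_1/(n-q_1 s)}$ bound on $\eta$ itself and simultaneously pins down the unique representative modulo constants. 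Antisymmetry of $\eta$ is preserved because the equation $L(\eta)=\omega$ with $\omega\in so(N)$ transposes to $L(\eta^T)=-\omega=L(-\eta)$, so uniqueness forces $\eta^T=-\eta$.

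\textbf{Main obstacle.} The delicate point is Step~2: all perturbative terms must be estimated using only the \emph{Lorentz} smallness $\|\laps{s}P_0\|_{L^{(n/s,2)}}\le\Theta\eps$ rather than $L^{n/s}$-smallness (which need not be small, since $\|\Omega_0\|_{L^{q_i}}$ is unconstrained). This forces a uniformly Lorentz-refined use of Hölder, Sobolev embedding and Lemma~\ref{reghs}; the exponent $2$ in $L^{(n/s,2)}$ is exactly what pairs with the Lorentz-Sobolev target $L^{(r_i,q_i)}$ to close the estimate in $L^{q_i}$.
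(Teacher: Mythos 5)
Your decomposition $L(\eta)=2\laps{s}\eta+R(\eta)$ is the same as the paper's (their $H(\eta)$ is your $R(\eta)$), but the smallness step fails for the exponent $q_2>\tfrac{n}{s}$, and this is exactly the point the paper works hard to avoid. You invoke the Lorentz--Sobolev embedding $\dot H^{s,q_i}(\R^n)\hookrightarrow L^{(r_i,q_i)}(\R^n)$ with $r_i=\tfrac{nq_i}{n-sq_i}$ to bound $\|\eta M\|_{L^{q_i}}$ by $\|\eta\|_{L^{(r_i,q_i)}}\|M\|_{L^{(n/s,2)}}$. For $i=1$ this is fine, but for $i=2$ one has $q_2>\tfrac{n}{s}$, so $n-sq_2<0$ and $r_2$ is not a valid exponent: there is no such embedding. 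More fundamentally, any Lorentz--H\"older bound $\|\eta M\|_{L^{q_2}}\aleq\|\eta\|_{L^{(r,\cdot)}}\|M\|_{L^{(n/s,2)}}$ would force $\tfrac{1}{r}=\tfrac{1}{q_2}-\tfrac{s}{n}<0$. The only way to land $\eta M$ in $L^{q_2}$ is to pay with $\|M\|_{L^{q_2}}$, which is controlled by $\Theta\|\Omega_0\|_{L^{q_2}}$ via \eqref{eq:gauge:q12} --- a finite quantity, but not small. Consequently the perturbation $R$ does not have small operator norm on $\dot H^{s,q_1}\cap\dot H^{s,q_2}$, and the Neumann series for $I+(2\laps{s})^{-1}R$ on this space does not converge for $\eps$ small depending only on $\Theta$. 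Your ``Main obstacle'' paragraph names the tension but asserts it can be ``closed in $L^{q_i}$''; for $i=2$ it cannot.

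The paper sidesteps this by not trying to invert on $\dot H^{s,q_1}\cap\dot H^{s,q_2}$ directly. In Step~1 they invert $L$ only on $L^{(r,2)}$ for $r\in(q_1,\tfrac{n}{s})$, where $r<\tfrac{n}{s}$ makes the Lorentz--Sobolev pairing with the small quantity $\|\laps{s}P_0\|_{L^{(n/s,2)}}\le\Theta\eps$ valid, exactly as in your estimate for $i=1$. Having produced a unique solution $\eta$ there, Steps~2 and~3 are \emph{a posteriori} bootstrap arguments: Step~2 iterates $\laps{s}\eta=\tfrac12\omega-H(\eta)$ finitely many times, raising the integrability of $\laps{s}\eta$ up to $L^{q_2}$ using the full (non-small) $\|\laps{s}P_0\|_{L^{q_2}}$; Step~3 then derives $\eta\in L^\infty$ (which, note, is itself unavailable at the outset --- you would need $\laps{s}\eta\in L^{q_2}$ for the Morrey embedding, circularly) and uses it together with $\|\laps{s}P_0\|_{L^{q_1}}<\infty$ to show $\laps{s}\eta\in L^{q_1}$ and $\eta\in L^{\frac{nq_1}{n-q_1s}}$. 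The commutator part of your estimate is salvageable (with $t$ chosen small enough), but the $\eta M$ bound is not, and without it your approach does not give invertibility of $L$ on the claimed space.
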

\begin{proof}
We follow the strategy of \cite[Lemma 4.1]{DLR2}. First we find $\eta$ in some $\dot{H}^{s,r}$ for $r \in (q_1,\frac{n}{s})$ and then that it belongs to the right spaces.

{\bf Step 1:} For $\eta \in L^{\frac{nq_1}{n-q_1 s}}\cap \dot{H}^{s,q_1} \cap \dot{H}^{s,q_2}(\R^n,so(N))$ we rewrite
\[
 L(\eta) = 2\laps{s} \eta + H(\eta)
\]
where
\[
\begin{split}
 H(\eta) :=& \eta \brac{-P_0^T \laps{s} P_0  + \laps{s}P_0^T\, P_0} + \brac{P_0^T\, \laps{s} P_0-\laps{s}P_0^T\ P_0}\eta\\
 &+ H_s(\eta,P_0)P_0 + P_0^T\, H_s(P_0,\eta).
\end{split}
 \]
In particular, for any $r \in (1,\frac{n}{s})$, by H\"older's inequality,
\[
 \|H(\eta) \|_{L^{(r,2)}(\R^n)} \aleq \|\eta\|_{L^{(\frac{rn}{n-rs},\infty)}(\R^n)}\, \|\laps{s} P_0\|_{L^{(\frac{n}{s},2)}(\R^n)}  + \|H_s(\eta,P_0)\|_{L^{(r,2)}(\R^n)}.
\]
By Sobolev embedding,
\[
 \|\eta\|_{L^{(\frac{rn}{n-rs},\infty)}(\R^n)} \aleq \|\laps{s} \eta\|_{L^{(r,\infty)}(\R^n)}.
\]
By the three-commutator estimates,
\[
  \|H_s(\eta,P_0)\|_{L^{(r,2)}(\R^n)} \aleq \|\laps{s} \eta\|_{L^{(r,\infty)}(\R^n)}\ \|\laps{s} P_0\|_{(r,2)}.
\]
Consequently, in view of \eqref{eq:gauge:p2est},
\[
 \|H(\eta) \|_{L^{(r,2)}(\R^n)}  \leq C\, \Theta\, \eps\, \|\laps{s} \eta\|_{L^{(r,\infty)}(\R^n)} \leq C\, \Theta\, \eps\, \|\laps{s} \eta\|_{L^{(r,2)}(\R^n)}
\]
Choosing $\eps$ small enough (depending on $\Theta$), we obtain that $L(\eta)$ is invertible as a map from $L^{(r,2)}(\R^n,so(N))$ to $L^{(\frac{rn}{n-rs},2)}\cap \dot{H}^{s,(r,2)}(\R^n,so(N))$, whenever $r \in (q_1,\frac{n}{s})$.

{\bf Step 2}: For given $\omega \in L^{q_1}\cap L^{q_2}(\R^n,so(N))$ and $r_0 \in (q_1,\frac{n}{s})$ let $\eta \in L^{(\frac{r_0n}{n-r_0s},2)}\cap \dot{H}^{s,(r_0,2)}(\R^n,so(N))$ so that 
\[
 \omega = L(\eta).
\]
We will show that $\laps{s} \eta \in L^{q_2}(\R^n)$. Indeed, since $\laps{s} P_0 \in L^{q_2}(\R^n)$, we can estimate for $\frac{1}{t_1} = \frac{1}{q_2}- \frac{s}{n}+ \frac{1}{r_0}$
\[
 \|H(\eta) \|_{L^{(t_1,2)}(\R^n)} \aleq \|\laps{s} \eta\|_{L^{(r_0,2)}(\R^n)}\, \|\laps{s} P_0\|_{L^{q_2}(\R^n)},
\]
which itself follows from Sobolev embedding and the following estimate from Lemma \ref{reghs}
\[\|H_s(\eta,P_0)\|_{L^{(t_1,2)}(\R^n)}  \aleq \|  \eta\|_{L^{(\frac{r_0n}{n-r_0s},2)}}\, \|\laps{s} P_0\|_{L^{(q_2,\infty)}(\R^n)}.\]
Since
\[
 \laps{s} \eta = \frac{1}{2}\omega - H(\eta)
\]
Now either $t_1 > q_2$, in which case we use that then $L^{(t_1,2)} \cap L^{r_0}(\R^n) \subset L^{q_2}(\R^n)$ (that follows from Step 1) and thus
\[
 \laps{s}\eta \in L^{q_2}\cap L^{r_0}(\R^n). 
\]
Otherwise, we know that $\frac{1}{r_0} -\frac{1}{t_1} = \frac{s}{n} - \frac{1}{q_2} > 0$. In this case we repeat the above argument for $r_1 := t_1$ and find $t_2$ which either is larger than $q_2$ or where $\frac{1}{r_1} -\frac{1}{t_2} = \frac{s}{n} - \frac{1}{q_2} > 0$. Possible repeating this procedure finitely many times we find that eventually some $t_i > q_2$.

{\bf Step 3} It remains to show that $\laps{s} \eta \in L^{q_1}(\R^n)$. Since we already know that $\laps{s} \eta \in L^{(r,2)}\cap L^{q_2}(\R^n)$ for some $r \in (q_1,\frac{n}{s})$ arbitrarily small, we find that $\eta \in L^\infty(\R^n)$.  In particular,
\[
\begin{split}
&\|\eta \brac{-P_0^T \laps{s} P_0  + \laps{s}P_0^T\, P_0} + \brac{P_0^T\, \laps{s} P_0-\laps{s}P_0^T\ P_0}\eta\|_{q_1,\R^n} \\
\aleq &\|\eta\|_{\infty}\ \|\laps{s} P_0\|_{q_1} < \infty.
\end{split}
\]
Moreover, $\laps{s} \eta \in L^{(r,2)} \cap L^{q_2}(\R^n) \subset L^{\frac{n}{s}}(\R^n)$. Thus Lemma \ref{reghs} implies
\[
 \|H_s(\eta,P_0)\|_{L^{q_1}(\R^n)} \aleq \|\laps{s} \eta\|_{L^{\frac{n}{s}}(\R^n)}\ \|\laps{s} P_0\|_{L^{q_1}(\R^n)}.
\]
Consequently,
\[
 \omega - H(\eta) \in L^{q_1}(\R^n),
\]
and thus $\laps{s}\eta \in L^{q_1} \cap L^{q_2}(\R^n)$. Moreover, by interpolation, $\eta \in L^{\frac{nq_1}{n-q_1s}}$. The estimates follow by the above considerations. Lemma~\ref{la:invertible} is proven.
\end{proof}
We continue with the {\bf proof of Proposition \ref{pr:VepseqUeps}.}\par
Thus, by Implicit Function Theorem applied to $F$, if $\eps = \eps(\Theta) > 0$ is chosen small enough, we find for any $\Omega_0 \in \mathcal{V}_{\eps,\Theta}$ some $\delta > 0$ such that for any $\Omega \in \mathcal{U}_\eps$ with
\[
 \|\Omega - \Omega_0 \|_{L^{q_1}(\R^n)}+\|\Omega - \Omega_0 \|_{L^{q_2}(\R^n)} < \delta
\]
we find $P = P_0 e^{U} \in \dot{H}^{s,q_1} \cap \dot{H}^{s,q_2}(\R^n,SO(N))$, so that $P-I \in L^{\frac{nq_1}{n-q_1 s}}(\R^n,\R^{N \times N})$ and \eqref{eq:Pgauge} is satisfied. By continuity of the inverse, we can make $\delta$ possibly smaller to guarantee that
\[
 \|\laps{s} (P-P_0)\|_{q_1,\R^n} \leq \|\Omega\|_{q_1,\R^n}, \quad \|\laps{s} (P-P_0)\|_{q_2,\R^n} \leq \|\Omega\|_{q_2,\R^n}.
\]
 
Observe that this does not right away imply \eqref{eq:gauge:p2est}, \eqref{eq:gauge:q12}. However the above estimate and the fact that $\Omega \in \mathcal{U}_\eps$ imply that for any $\sigma > 0$ we can choose $\eps$ small enough so that 
\[
 \|\laps{s} P\|_{(\frac{n}{s},2),\R^n}\leq \sigma.
\]
The next Lemma shows us that this implies for a small enough choice of $\sigma >0$ that \eqref{eq:gauge:p2est}, \eqref{eq:gauge:q12}
 hold for a uniform constant $\Theta$.
\begin{lemma}\label{la:Pgaugeuniformest}
There exists a $\Theta > 0$ and a $\sigma > 0$ so that whenever $P \in \dot{H}^{s,q_1} \cap \dot{H}^{s,q_2}(\R^n,SO(N))$ and $P-I \in L^{\frac{nq_1}{n-q_1 s}}(\R^n,\R^{N \times N})$ so that \eqref{eq:Pgauge} is satisfied and it holds
\begin{equation}
  \|\laps{s} P\|_{(\frac{n}{s},2),\R^n}\leq \sigma,
\end{equation}
then \eqref{eq:gauge:p2est}, \eqref{eq:gauge:q12} hold.
\end{lemma}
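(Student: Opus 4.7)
The natural starting point is the identity derived in the proof of Theorem~\ref{th:gauge}, namely
\[
 \laps{s} P + P\Omega = \tfrac{1}{2}\, H_s(P, P^T)\, P,
\]
which follows from \eqref{eq:Pgauge} together with $\laps{s}(PP^T) = \laps{s} I = 0$. Since $P$ takes values in $SO(N)$ we have $|P|\aleq 1$ pointwise; taking any translation-invariant norm $\|\cdot\|_X$ of this identity therefore gives
\[
 \|\laps{s} P\|_{X} \leq \|\Omega\|_{X} + C\, \|H_s(P, P^T)\|_{X}.
\]
The whole plan is to show that, under the smallness hypothesis $\|\laps{s} P\|_{(\frac{n}{s}, 2)}\leq \sigma$, the three-commutator remainder is small enough to be absorbed back into the left-hand side when $X = L^{(\frac{n}{s},2)}$ and when $X = L^{q_i}$ for $i = 1,2$.

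For the critical bound \eqref{eq:gauge:p2est}, I apply Lemma~\ref{reghs} with $t = s/2$ and Lorentz second indices $(2,\infty)$, giving
\[
 \|H_s(P, P^T)\|_{(\frac{n}{s}, 2)} \aleq \|\laps{s/2} P\|_{(\frac{2n}{s}, 2)}\, \|\laps{s/2} P\|_{(\frac{2n}{s}, \infty)}.
\]
Both factors are controlled by the Lorentz-Sobolev embedding $\dot{H}^{s,(\frac{n}{s}, 2)} \hookrightarrow \dot{H}^{s/2, (\frac{2n}{s}, 2)}$ (for the second factor one then uses $L^{(r,2)} \subset L^{(r,\infty)}$), yielding
\[
 \|H_s(P, P^T)\|_{(\frac{n}{s}, 2)} \aleq \|\laps{s} P\|_{(\frac{n}{s}, 2)}^2 \leq \sigma\, \|\laps{s} P\|_{(\frac{n}{s}, 2)}.
\]
Choosing $\sigma$ small enough that $C\sigma \leq 1/2$ allows absorption and produces \eqref{eq:gauge:p2est} with a universal constant $\Theta$.

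For the subcritical bounds \eqref{eq:gauge:q12} one needs an asymmetric split. Given $q_i$, choose $t = t_i \in (0, s)$ with $q_i < \frac{n}{s-t}$ (any $t \in (0,s)$ works for $q_1 < \frac{n}{s}$, while $q_2 > \frac{n}{s}$ forces $t \in (0, s - \tfrac{n}{q_2})$, still a nonempty interval), and set $p_2 := 1/(1/q_i - (s-t)/n)$. Then Lemma~\ref{reghs} with Lorentz indices $(\infty, q_i)$ yields
\[
 \|H_s(P, P^T)\|_{q_i} \aleq \|\laps{s-t} P\|_{(\frac{n}{s-t}, \infty)}\, \|\laps{t} P\|_{(p_2, q_i)}.
\]
The first factor is controlled by the critical embedding $\dot{H}^{s,(\frac{n}{s}, 2)} \hookrightarrow \dot{H}^{s-t, (\frac{n}{s-t}, 2)} \subset \dot{H}^{s-t, (\frac{n}{s-t}, \infty)}$, hence by $\|\laps{s} P\|_{(\frac{n}{s}, 2)} \leq \sigma$; the second by the subcritical embedding $\dot{H}^{s, q_i} \hookrightarrow \dot{H}^{t, (p_2, q_i)}$, hence by $\|\laps{s} P\|_{q_i}$. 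Absorbing for small $\sigma$ delivers \eqref{eq:gauge:q12}.

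The main technical obstacle is purely bookkeeping: arranging Lorentz exponents in the three-commutator estimate so that \emph{one} factor always carries the small critical norm $\|\laps{s} P\|_{(\frac{n}{s},2)} \leq \sigma$ while the \emph{other} carries the target norm of $\laps{s} P$, uniformly for both $q_1 < \frac{n}{s}$ and $q_2 > \frac{n}{s}$. No further geometric input beyond $P \in SO(N)$ and the antisymmetry of $\Omega$ (already used in deriving the starting identity) enters.
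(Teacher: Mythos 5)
Your proof is correct and follows essentially the same route as the paper: start from the gauge identity expressing $\laps{s}P + P\Omega$ (equivalently $P^T\laps{s}P + \Omega$) as a multiple of the three-commutator $H_s(P,P^T)$, estimate that commutator via Lemma~\ref{reghs} so that one factor carries the small critical norm controlled by $\|\laps{s}P\|_{(\frac{n}{s},2)} \leq \sigma$ and the other the target norm, and absorb. The only cosmetic difference is that the paper treats all Lorentz exponents $(p,q)$ with $p \in [q_1,q_2]$ uniformly by always extracting the factor $\|\laps{s}P\|_{(\frac{n}{s},\infty)}$, whereas you separate the critical case $(\frac{n}{s},2)$ from the subcritical cases $q_1, q_2$ with tailored exponent choices — both are valid instances of the same mechanism.
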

\begin{proof}
In view of \eqref{eq:Pgauge}
\[
 P^T \laps{s} P = \frac{1}{2} H_s(P^T,P) -\Omega. 
\]
In particular, by the three-commutator estimates in Lemma~\ref{reghs}, for a uniform constant $C$ for any $p \in [q_1,q_2]$, $q \in [1,\infty]$,
\[
 \|\laps{s} P\|_{(p,q)} \leq C_1 \|\laps{s} P\|_{(\frac{n}{s},\infty)}\ \|\laps{s} P\|_{(p,q)} + \|\Omega\|_{(p,q)}
\]
Moreover,
\[
 \|\laps{s} P\|_{(\frac{n}{s},\infty)} \leq C_2\, \|\laps{s} P\|_{\frac{n}{s},2} \leq C_2 \sigma
\]
for $\sigma$ small enough we can absorb and find,
\[
 \|\laps{s} P\|_{(p,q)} \leq \frac{1}{1-C_1\, C_2 \sigma} \|\Omega\|_{(p,q)}.
\]
Choosing $\Theta := \frac{1}{1-C_1\, C_2 \sigma}$ we conclude.
\end{proof}
Thus the openness property $(iv)$ is proven, Proposition~\ref{pr:VepseqUeps} is established, and with the approximation argument in \cite[Proof of Theorem~1.2, Step 4]{DLR2} Theorem~\ref{th:gauge2} is proven. \qed

\section{The improved Morrey space estimate: Proof of Proposition~\ref{antisympotreg}}\label{s:proofantisympotreg}
Let $w \in L^{\frac{n}{n-s}}(\R^n,\R^N)$ be a solution of
\[
 \laps{s}w^i = \Omega_{ij}\, w^j + E_i(w) \quad \mbox{in $\R^n$},
\]
where $\Omega_{ij} = -\Omega_{ji} \in L^{(\frac{n}{s},2)}$ and $E$ is as above. 

By absolute continuity of the integral there exists $R > 0$ so that 
\[
 \sup_{x_0 \in \R^n} \|\Omega_{ij}\|_{(\frac{n}{s},2),B(x_0,10R)} < \delta < \eps
\]
for the $\eps > 0$ from Theorem~\ref{th:gauge}, and $\delta$ chosen later. It suffices to prove the claim \eqref{eq:morreyest} in $B(x_0,\rho) \subset B(y_0,R)$, where $y_0 \subset \R^n$ is arbitrary (and the constants will not depend on $y_0$, but may depend on $R$). Let $\eta_{B(y_0,2R)} \in C_c^\infty(B(y_0,10R))$ be the generic smooth cutoff function which is constantly one in $B(y_0,5R)$.
Applying Theorem~\ref{th:gauge} to $\eta_{B(y_0,2R)} \Omega$ we find $P \in \dot{H}^{s,(\frac{n}{s},2)}$ so that 
\[
\|\laps{s} P\|_{(\frac{n}{s},2)} + \|\laps{s} P + P \eta_{B(y_0,2R)}\Omega\|_{L^{(\frac{n}{s},1)}(\R^n,\R^{N\times N})} \aleq \|\eta_{B(y_0,R)} \Omega\|_{(\frac{n}{s},2)}.
\]
We have
\[
\begin{split}
 \laps{s} (Pw) =& -(\laps{s} P + P \Omega)\, w + P E(w) + \brac{\laps{s} (Pw)+\laps{s}P w - P\laps{s} w}\\
 =& -(\laps{s} P+P\eta_{B(y_0,R)} \Omega)\, w-(1-\eta_{B(y_0,R)}) \Omega w+ P E(w)\\
 +& \brac{\laps{s} (Pw)+\laps{s}P w - P\laps{s} w}
 \end{split}
\]\\
In particular, for any $\varphi \in C_c^\infty(B(x_0,\rho))$, for $B(x_0,\rho) \subset B(y_0,R)$, possibly choosing $R$ even smaller for the estimate of $E_i$ to take effect (in the following we write the estimates for the case $n/s > 2$, the case $n/s \leq 2$ is analogous), since $\Omega \varphi = \eta_{B(y_0,2R)}\Omega \varphi$, for all sufficiently large $k_0$, for some $\sigma > 0$
\begin{eqnarray*}
&& \int_{\R^n} Pw\ \laps{s}\varphi \aleq \delta \|w\|_{(\frac{n}{s},\infty),B(x_0,\rho)}\ \|\varphi\|_{\infty}\\
 &&+  \eps\, \brac{\|P\varphi\|_{\infty} + \|\laps{s} (P\varphi)\|_{(\frac{n}{s},2)}}\ \brac{\|w\|_{(\frac{n}{n-s},\infty),B(x_0,2^{k_0} \rho)} + \sum_{k=k_0}^\infty 2^{-k\sigma}\|w\|_{(\frac{n}{n-s},\infty),B(x_0,2^{k }\rho)})} \\
 && + \||w|\,  |H_s(P,\varphi)|\|_{1}.
 \end{eqnarray*}
Firstly,
\[
 \brac{\|P\varphi\|_{\infty} + \|\laps{s} (P\varphi)\|_{(\frac{n}{s},2)}} \aleq \|\laps{s} \varphi\|_{(\frac{n}{s},1)} \brac{{1 + }\|\laps{s} P\|_{(\frac{n}{s},2)}}.
\]
Moreover, by the three commutator estimates and after localization,
\[
 \||w|\,  |H_s(P,\varphi)|\|_{1} \aleq \delta\, \|\laps{s} \varphi\|_{(\frac{n}{s},1)}\ \brac{\|w\|_{(\frac{n}{n-s},\infty),B(x_0,2^{k_0} \rho)} + \sum_{k=k_0}^\infty 2^{-k\sigma}\|w\|_{(\frac{n}{n-s},\infty),B(x_0,2^{k} \rho)}} 
\]
That is, for any $\varphi \in C_c^\infty(B(x_0,\rho))$ so that $\|\laps{s} \varphi \|_{(\frac{n}{s},1)} \leq 1$, we have
\[
\int_{\R^n} Pw\ \laps{s}\varphi \aleq \brac{\eps+\delta} \brac{\|w\|_{(\frac{n}{n-s},\infty),B(x_0,2^{k_0} \rho)} + \sum_{k=k_0}^\infty 2^{-k\sigma} \|w\|_{(\frac{n}{n-s},\infty),B(x_0,2^{k} \rho)}} 
\]
Taking the supremum over all such $\varphi$, see e.g. \cite[Proposition A.3.]{Blatt-Reiter-Schikorra-2016}, we obtain, possibly for a larger $k_0$,
\[
 \|w\|_{(\frac{n}{n-s},\infty),B(x_0,2^{-k_0}\rho)} \aleq \brac{\eps+\delta} \brac{\|w\|_{(\frac{n}{n-s},\infty),B(x_0,2^{k_0} \rho)} + \sum_{k=k_0}^\infty 2^{-k\sigma} \|w\|_{(\frac{n}{n-s},\infty),B(x_0,2^{k} \rho)}}. 
\]
Choosing $\eps$ and $\delta$ small enough this is a decay estimate that can be iterated on smaller and smaller balls, and gives the claim. See e.g. \cite[Lemma A.8]{Blatt-Reiter-Schikorra-2016}.

\section*{Acknowledgment}
A.S. is supported by the German Research Foundation (DFG) through grant no.~SCHI-1257-3-1. He receives funding from the Daimler and Benz foundation. A.S. was Heisenberg fellow.


\begin{thebibliography}{10}

\bibitem{Adams-1975}
D.~R. Adams.
\newblock A note on {R}iesz potentials.
\newblock {\em Duke Math. J.}, 42(4):765--778, 1975.

\bibitem{Blatt-Reiter-Schikorra-2016}
S.~Blatt, P.~Reiter, and A.~Schikorra.
\newblock Harmonic analysis meets critical knots. {C}ritical points of the
  {M}\"obius energy are smooth.
\newblock {\em Trans. Amer. Math. Soc.}, 368(9):6391--6438, 2016.

\bibitem{Coifman-Rochberg-Weiss-76}
R.~R. Coifman, R.~Rochberg, and G.~Weiss.
\newblock Factorization theorems for {H}ardy spaces in several variables.
\newblock {\em Ann. of Math. (2)}, 103(3):611--635, 1976.

\bibitem{DLnD}
F.~Da~Lio.
\newblock Fractional harmonic maps into manifolds in odd dimension {$n>1$}.
\newblock {\em Calc. Var. Partial Differential Equations}, 48(3-4):421--445,
  2013.

\bibitem{DLbubble}
F.~Da~Lio.
\newblock Compactness and bubble analysis for 1/2-harmonic maps.
\newblock {\em Ann. Inst. H. Poincar\'e Anal. Non Lin\'eaire}, 32(1):201--224,
  2015.

\bibitem{DaLio-Laurain-Riviere-2016}
F.~{Da Lio}, P.~{Laurain}, and T.~{Rivi{\`e}re}.
\newblock {A Pohozaev-type formula and Quantization of Horizontal Half-Harmonic
  Maps}.
\newblock {\em Preprint, ArXiv:1607.05504}, 2016.

\bibitem{DLLR-16}
F.~{Da Lio}, P.~{Laurain}, and T.~{Rivi{\`e}re}.
\newblock {A Pohozaev-type formula and Quantization of Horizontal Half-Harmonic
  Maps}.
\newblock {\em ArXiv e-prints}, July 2016.

\bibitem{DLR2}
F.~Da~Lio and T.~Rivi\`ere.
\newblock Sub-criticality of non-local {S}chr\"odinger systems with
  antisymmetric potentials and applications to half-harmonic maps.
\newblock {\em Adv. Math.}, 227(3):1300--1348, 2011.

\bibitem{DLR1}
F.~Da~Lio and T.~Rivi\`ere.
\newblock Three-term commutator estimates and the regularity of
  {$\frac12$}-harmonic maps into spheres.
\newblock {\em Anal. PDE}, 4(1):149--190, 2011.

\bibitem{DaLio-Schikorra}
F.~Da~Lio and A.~Schikorra.
\newblock {$n/p$}-harmonic maps: regularity for the sphere case.
\newblock {\em Adv. Calc. Var.}, 7(1):1--26, 2014.

\bibitem{Duzaar-Mingione-2010}
F.~Duzaar and G.~Mingione.
\newblock Local {L}ipschitz regularity for degenerate elliptic systems.
\newblock {\em Ann. Inst. H. Poincar\'e Anal. Non Lin\'eaire},
  27(6):1361--1396, 2010.

\bibitem{Goldsteins-2017}
P.~{Goldstein} and A.~{Zatorska-Goldstein}.
\newblock {Remarks on Uhlenbeck's decomposition theorem}.
\newblock {\em ArXiv e-prints}, Apr. 2017.

\bibitem{Gra1}
L.~Grafakos.
\newblock {\em Classical {F}ourier analysis}, volume 249 of {\em Graduate Texts
  in Mathematics}.
\newblock Springer, New York, third edition, 2014.

\bibitem{Kolasinksi-2010}
S.~Kolasi\'nski.
\newblock Regularity of weak solutions of {$n$}-dimensional {$H$}-systems.
\newblock {\em Differential Integral Equations}, 23(11-12):1073--1090, 2010.

\bibitem{Lenzmann-Schikorra-2016}
E.~{Lenzmann} and A.~{Schikorra}.
\newblock {Sharp commutator estimates via harmonic extensions}.
\newblock {\em Preprint, ArXiv:1609.08547}, 2016.

\bibitem{Mazowiecka-Schikorra}
K.~{Mazowiecka} and A.~{Schikorra}.
\newblock {Fractional div-curl quantities and applications to nonlocal
  geometric equations}.
\newblock {\em Preprint, ArXiv:1703.00231}, 2017.

\bibitem{Millot-Sire-2015}
V.~Millot and Y.~Sire.
\newblock On a fractional {G}inzburg-{L}andau equation and 1/2-harmonic maps
  into spheres.
\newblock {\em Arch. Ration. Mech. Anal.}, 215(1):125--210, 2015.

\bibitem{Riviere-2007}
T.~Rivi\`ere.
\newblock Conservation laws for conformally invariant variational problems.
\newblock {\em Invent. Math.}, 168(1):1--22, 2007.

\bibitem{Riviere-2011}
T.~Rivi\`ere.
\newblock Sub-criticality of {S}chr\"odinger systems with antisymmetric
  potentials.
\newblock {\em J. Math. Pures Appl. (9)}, 95(3):260--276, 2011.

\bibitem{Schikorra-2010}
A.~Schikorra.
\newblock A remark on gauge transformations and the moving frame method.
\newblock {\em Ann. Inst. H. Poincar\'e Anal. Non Lin\'eaire}, 27(2):503--515,
  2010.

\bibitem{Schikorra-n2}
A.~Schikorra.
\newblock Regularity of {$n/2$}-harmonic maps into spheres.
\newblock {\em J. Differential Equations}, 252(2):1862--1911, 2012.

\bibitem{Schikorra-2013}
A.~Schikorra.
\newblock A note on regularity for the {$n$}-dimensional {$H$}-system assuming
  logarithmic higher integrability.
\newblock {\em Analysis (Berlin)}, 33(3):219--234, 2013.

\bibitem{Schikorra-gagliardo}
A.~Schikorra.
\newblock Integro-differential harmonic maps into spheres.
\newblock {\em Comm. Partial Differential Equations}, 40(3):506--539, 2015.

\bibitem{Schikorra-lpgrad}
A.~Schikorra.
\newblock {$L^p$}-gradient harmonic maps into spheres and {$SO(N)$}.
\newblock {\em Differential Integral Equations}, 28(3-4):383--408, 2015.

\bibitem{Schikorra-eps}
A.~Schikorra.
\newblock {$\varepsilon$}-regularity for systems involving non-local,
  antisymmetric operators.
\newblock {\em Calc. Var. Partial Differential Equations}, 54(4):3531--3570,
  2015.

\bibitem{Schikorra-2017}
A.~{Schikorra}.
\newblock {Boundary regularity for conformally invariant variational problems
  with Neumann data}.
\newblock {\em ArXiv e-prints}, Mar. 2017.

\bibitem{Schikorra-Strzelecki}
A.~{Schikorra} and P.~{Strzelecki}.
\newblock {Invitation to H-systems in higher dimensions: known results, new
  facts, and related open problems}.
\newblock {\em EMS Surv. Math. Sci. (to appear)}, (1), 2017.

\bibitem{Tr83}
H.~Triebel.
\newblock {\em Theory of function spaces}, volume~78 of {\em Monographs in
  Mathematics}.
\newblock Birkh\"auser Verlag, Basel, 1983.

\bibitem{Uhlenbeck-1982}
K.~K. Uhlenbeck.
\newblock Connections with {$L^{p}$}\ bounds on curvature.
\newblock {\em Comm. Math. Phys.}, 83(1):31--42, 1982.

\end{thebibliography}

\end{document}